\pgfplotsset{compat=1.15}
\numberwithin{equation}{section}
\numberwithin{figure}{section}
\newtheorem{theorem}{Theorem}[section]
\newtheorem{prop}[theorem]{Proposition}
\newtheorem{lemma}[theorem]{Lemma}
\theoremstyle{definition}
\newcommand{\nc}{\newcommand}
\nc{\R}{{\mathbb R}}
\nc{\N}{{\mathbb N}}
\nc{\Z}{{\mathbb Z}}
\nc{\BP}{\mathbb{P}}
\nc{\BQ}{\mathbb{Q}}
\DeclareMathOperator{\Bin}{Bin}
\DeclareMathOperator{\Var}{\mathbb{V}}
\DeclareMathOperator{\BCov}{{Cov}}
\begin{document}

\title{On a game of chance in
Marc Elsberg's thriller
\enquote{GREED}}
\author{Tamara Göll\footnote{Karlsruhe Institute of Technology (KIT), D-76131 Karlsruhe, Germany. E-Mail: tamara.goell@kit.edu}\,\,   and
Daniel Hug\footnote{Karlsruhe Institute of Technology (KIT), D-76131 Karlsruhe, Germany. E-Mail: daniel.hug@kit.edu}}

\date{\today}

\maketitle
\begin{abstract}
A (possibly illegal) game of chance, which is described in Chapter 14 of Marc Elsberg's thriller \enquote{GREED}, seems to offer an excellent chance of winning. However, as the gambling starts and evolves over several rounds, the actual experience of the
vast majority of the gamblers in a pub is strikingly different. We provide an analysis of this specific game and several of its variants by elementary tools of probability. Thus we also encounter an interesting threshold phenomenon, which is related to the transition from a profit zone to a loss area. Our arguments are motivated and illustrated by numerical calculations with Python. \smallskip

{\bf Keywords}. {Gambling, coin tossing, average profit, variance, concentration inequality, threshold phenomenon, law of large numbers}

 \smallskip
{\bf MSC}. Primary 60-01, 60F05, 60C05,  60E15, 91A15, 97K50; Secondary 00A08, 00A09, 60F10.
\end{abstract}

\section{Introduction}

In the summer of 2020, one of the authors (DH) spent a wonderful, albeit short, family holiday on one of the  Frisian North Sea  islands. At last one could enjoy nature with almost no worries and a brief period of the year when incidence numbers, exponential growth and R-factors had faded into the background. My wife had immersed herself in one of her books on the beach, while I relaxed and let my eyes wander over the expanse of the sea.  Finally, my wife, who teaches mathematics, turned to me and said, \enquote{You might be interested in this.} While reading Marc Elsberg's thriller  \enquote{GREED}, she had come across an interesting connection, which we initially discussed animatedly without pencil and paper. The following investigation finally arose from this first conversation. \medskip

Elsberg had his breakthrough as an author in 2012 with \enquote{BLACKOUT}. The book describes the scenario of a widespread collapse of power supply and its consequences. With \enquote{ZERO} and \enquote{HELIX} he confirmed his reputation as a master of the science thriller. In his eighth book with the title \enquote{GREED} \cite{Els}, Elsberg deals with economic concepts, findings and theories with a focus on the question whether comprehensive cooperation between economic partners and branches of industry could lead to greater prosperity for everybody. He relies on scientific work related to ergodicity economics of a group led by Ole Peters at the London Mathematical Laboratory, which was supported by the Nobel Prize winners Murray Gell-Mann and Ken Arrow \cite{PGM}. \medskip

 In his review of \enquote{GREED}, Edgar Fell \cite[translated from  German]{Fell}  comments on Elsberg's book:

 \begin{quote}
\enquote{In the course of this exciting story, more and more connections of an economic nature emerge. It is fascinating to see how the author succeeds in bringing complex economic and social issues closer to the reader.
Even the mathematical foundations of game theory are built into the plot in a  \enquote{playful} way. An illegal  game of chance in a bar, for example, offers the opportunity to take the first steps in this direction. It works like magic. Elsberg's captivating art of storytelling allows even readers who are completely untrained in mathematics to grasp his number games. The knowledge that is conveyed stimulates thought - about how modern forms of society actually work.}
\end{quote}

\medskip

 The aim of the following considerations is to present an elementary analysis of a game of chance (a bet) from  Chapter 14 of Elsberg's thriller \enquote{GREED},  mentioned in the review by Edgar Fell, and of variants of this bet. On the one hand, this offers the opportunity to apply some of the basic concepts of (elementary)  stochastics. In this respect it is fair to say that games of chance provided much of the inspiration behind the birth of probability theory; see the historical review in Ethier's  book on the Doctrine of Chances \cite{Ethier}.
 On the other hand, the investigation naturally leads to a threshold phenomenon (phase transition). Phenomena of this kind were originally observed in statistical physics, but also play an important role in the analysis of random graphs and random polytopes. For an elementary introduction to the topic of phase transitions in classical random graphs (Erdös--Renyi) we refer to \cite{PE}, the classical work \cite{ER} as well as the monographs and textbooks \cite{BB, Hofstad, Janson}. Threshold phenomena with random polytopes (e.g.~in high dimensions), random cones and connections to  optimization, data analysis and signal processing have been investigated in \cite{ALMT,Bonnet1, Bonnet2, DT09, DT, GKT, Gad, HS1, HS2, Piv}, for example.

\medskip

 In Section \ref{sec:Gier}, we provide a summary of those aspects from \enquote{GREED} which are relevant for the present discussion. We essentially focus on Chapter 14, in which Elsberg  stages the dynamics associated with the gambling scenario (the bet). In the following sections, we will analyse this particular gamble step by step. Here we start from the specific situation provided in the thriller. Motivated by our observations on the initial scenario described by Elsberg and a first quantitative analysis in Section \ref{sec3}, we generalise the underlying parameters of this game of chance (see Section \ref{sec:variations}). At first we examine the asymptotic behaviour of the expected net profit as the number $n$ of rounds tends to infinity (Elsberg's choice is $n=100$). Then we introduce general parameters $u$ and $d$ used to update the score after each round and find pairs $(d,u)$ (at least numerically if $n$ is finite) that define a fair game (Elsberg's choice corresponds to $d=0.6$, $u=1.5$ which results in an unfair game). The asymptotic analysis as $n$ tends to infinity leads to a surprising threshold phenomenon which marks the asymptotically sharp transition from the profit zone to the loss area. While the original version of the bet is based on successively tossing a fair coin, in  Section \ref{sec:Fakecoins} we also explore the effect a biased coin has on the outcome of the bet, which allows us to establish a similar, but more general asymptotic threshold property.  In addition, it can be seen from our investigation that a surprisingly small bias may already turn an unfavourable game into a fair one while keeping the other parameters fixed (see Figure \ref{fig erw gewinn p var}). By a thorough analysis of the asymptotic behaviour of the variance of the net profit as the number $n$ of rounds tends to infinity, we can even deduce the limit distribution of the net profit. It turns out that the limit distribution is deterministic, except for the case in which $(d,u)$ lies on the boundary between profit zone and loss area when we obtain a two-point distribution in the limit.  Finally, in Section  \ref{sec_simulationen} we illustrate some numerical simulations that motivate a small excursion to a generalised birthday problem. Throughout the paper, our arguments are motivated and illustrated by numerical calculations with Python (relevant source code can be selected from the arXiv version of the paper).

\section{Elsberg's game of chance \cite{Els} and some initial insights} \label{sec:Gier}

In Chapter 14 of \enquote{GREED} \cite{Els}, Elsberg describes the following scenario. \medskip

A group of people gathers in a bar in `Berlin Mitte'. A man (Fitzroy Peel, the croupier) offers the following bet to the rest of the group: A player starts with an initial score of $100$ points (units).  Afterwards, a coin is tossed one hundred times. In each round, the current score is increased by fifty percent, if the coin shows \enquote{heads}. Otherwise, the current score is reduced by forty percent. After one hundred rounds, there are two possibilities. If the final score exceeds $100$ points (units), the player wins and receives double his stake. The mentioned stake can be chosen by the player, but is not allowed to exceed one hundred euros. If the final score does not exceed one hundred points, the player loses. The payout in the case of a loss is not explicitly specified in \cite{Els} (although it seems natural to assume that Elsberg intended the player to lose his or her entire stake). \medskip

While one member of the group called \enquote{T-Shirt} wants to participate right away, another one (Jan) is sceptical at first. T-Shirt tries to convince Jan with the following explanation. In his opinion, one simply needs to take the mean of the possible outcomes. He therefore adds the possible percentages after one round (150 percent and 60 percent) and divides the sum by the number of possible outcomes (two). This yields a mean of 105 percent in each round. Jan and two other members of the group seem to be convinced by T-Shirts explanation and agree to join the game. \medskip

However, another member of the group speaks up and expresses his concerns about the neglect of probabilities in the previous explanation. In his opinion, one needs to consider that the coin shows \enquote{heads} or \enquote{tails}, each with  probability  $\frac{1}{2}$. Therefore, he multiplies the possible outcomes with the associated probabilities to arrive at an average gain of 105 percent in each round - the same result as before. \medskip

Finally, one more member of the group explains his point of view  on the suggested bet. He explains that the average increase of five percent in each round yields an average final score (expected outcome)  of 131.5 times the initial score. T-Shirt seems confident of his victory and the group starts the offered gamble.\medskip

In the following chapters, Elsberg describes the  reactions of the members of the group as the gambling evolves. Initially, a euphoric mood spreads in the room, as the majority of the players are  successful in the beginning. However, after a couple of rounds more and more people end up with low scores and finally only one player has a  score exceeding $100$ (units). Now the mood in the room shifts completely. There are violent accusations of cheating, leading even to a physical confrontation.\medskip

Subsequently (in Chapter 22), a first popular explanation of the preceding events is given. On the one hand, it is argued that the mean values calculated by some of the participants are not appropriate for analysing the game, as they do not describe the course of the game over time, and that they ignore the fact that the initial situation can be different in each round.  Here, the phenomenon of (lack of) ergodicity (the coincidence of temporal mean values and probabilistic averages) is used as an explanation. However, other points are perhaps more decisive for the analysis of the game.

\medskip

More helpful is the remark that with an initial score of 100 (units) and a loss of forty percent in the first round, the score is reduced to sixty.  Then, in order to reach again 100 (units) by winning in the second round, 66.67 percent instead of just 50 percent of the current score would be required. But even if a player wins in the first round and loses in the second round, the remaining score is only ninety. Ultimately, the basic error of the players is to calculate an expected value for the starting round and to conclude that $0.5\cdot 1.5 + 0.5\cdot 0.6 = 1.05$ is the factor by which the profit per round should increase.  Although the expected score at the end of the game varies exactly in this way (see below), the rules of the game do not state that the payout is double the stake  if the \textit{mean value} is greater than 100 (units), but if the \textit{actual outcome} of the game results in a score that is greater than $100$  (units). In addition, the prize in the event of a win is independent of the final score. It is only relevant whether the final score exceeds the initial score of $100$ (units).

\section{A quantitative analysis}\label{sec3}

We start by summarising and formalising the rules of the previously described game. Here we suggest a payout rule in the case of a loss which is more advantageous for the gamblers than a complete loss of the stake.

\begin{enumerate}
    \item The stake $a\le 100 $ (euros) is placed. The initial score is $100$ (units).
    \item The game consists of 100 rounds, each starting with the toss of a fair coin.
    \item If the coin shows \enquote{heads}, the current score is increased by $50\%$.\\
    Otherwise, the current score is reduced by $40\%$.
    \item This procedure will be continued until 100 rounds are completed.
    \item In the end, the player wins if the final score exceeds $100$ and receives a payout of twice the individual  stake. In this case the net profit equals the stake. Otherwise, the payout of the player is
    $$
    \text{stake}\cdot \frac{\text{final score}}{100},
    $$
hence the net profit equals
$$
    \text{stake}\cdot \frac{\text{final score}}{100}-\text{stake},
$$
which is negative (a loss), but not a complete loss of the stake. In other words, the higher the final score, the higher the percentage of the stake that the player keeps.
\end{enumerate}

\noindent
\textbf{Remark:} As said before, the payout in the event of a loss is not explicitly specified by Elsberg. If, in contrast to the situation described above, we agree that the player loses his or her entire stake in the event of a loss, then the outcome would be even more disadvantageous for the player. The analysis below  then  simplifies significantly as the consideration of the quantity $A(\ldots)$ in Section \ref{subsec betting on the edge} can be omitted.

\medskip

\noindent
{\bf Simulations:} Using Python (or a similar programming language), the  game can be simulated very easily. Various realisations are displayed in  Section \ref{sec_simulationen}.

\medskip
\noindent
{\bf A first analysis:} Let $a$ denote the stake. We assume that the coin tosses are done independently with a fair coin. If the coin shows \enquote{heads} $k$ times and \enquote{tails}  $(100-k)$ times during the $n=100$ rounds, for some $k\in \{0, \ldots, 100\}$, then the final score is given by
\begin{equation}
    b(k) = 100\cdot 1.5^k \cdot 0.6^{100-k}. \label{kapital ende}
\end{equation}
Note that the final score depends only on the numbers of \enquote{heads} and \enquote{tails} and not on the particular order in which they appear. \medskip

Using \eqref{kapital ende}, we can easily find the smallest integer $k$ necessary to win the game. A player wins the game, yielding a net profit of $a$ euros, if and only if the condition
\begin{equation}\label{eqneu2}
    b(k) = 100\cdot 1.5^k \cdot 0.6^{100-k} > 100
\end{equation}
is satisfied. Condition \eqref{eqneu2} can be  rewritten as
\begin{equation}
    \left(\frac{1.5}{0.6} \right)^k > \left(\frac{1}{0.6} \right)^{100}
\end{equation}
or
\begin{equation}\label{eq_cond_win}
    k > 100 \cdot \frac{\ln\left(\frac{5}{3} \right)}{\ln\left(\frac{5}{2} \right)},
\end{equation}
where $\ln$ denotes the natural logarithm. Clearly,  \eqref{eq_cond_win} does neither depend on the initial score $100$,  nor on the stake $a$, and \eqref{eqneu2} is equivalent to
\begin{equation}
    k > 100 \cdot \frac{\ln 5 - \ln 3}{\ln 5 - \ln 2} \approx 55.749.
\end{equation}
Consequently, a player receives a net payout of $a$ euros (keeping the initial stake) in the case where $k\geq 56$, whereas the net profit is
   $a\cdot 1.5^k\cdot 0.6^{100-k} -a< 0$ euros (which is a loss that depends on the particular number $k$) if $k\leq 55$.

\medskip

In the underlying scenario, we can easily compute the probability of a loss, which is given by
$$
\sum_{k=0}^{55}\binom{100}{k}\left(\frac{1}{2}\right)^{100}\approx 0.864,
$$
whereas the probability of a win is approximately given by $0.136$.\medskip

The previously determined probability of winning can also be seen in our numerical simulations (see Section \ref{sec_simulationen}). Figure \ref{fig simulation win} illustrates the outcomes of $100$ simulations of Elsberg's game of chance. Only $14$ out of the $100$ simulations were beneficial for the gamblers.

\medskip

Further, we can determine the expected net profit, i.e., the payout minus the stake, of a player. Hence, the expected net profit is given by
\begin{align*}
&\sum_{k=0}^{55}\binom{100}{k}\left(\frac{1}{2}\right)^{100}\cdot\left(a\cdot 1.5^k\cdot 0.6^{100-k} -a\right)+
\sum_{k=56}^{100}\binom{100}{k}\left(\frac{1}{2}\right)^{100}\cdot a\\
&=a\cdot \left(\frac{1}{2}\right)^{100}\left[\sum_{k=0}^{55}\binom{100}{k}\left(1.5^k\cdot 0.6^{100-k}-1\right)+
\sum_{k=56}^{100}\binom{100}{k}\right]\\
&=a\cdot\left[ -1+\sum_{k=0}^{55}\binom{100}{k}0.75^k\cdot 0.3^{100-k}+\left(\frac{1}{2}\right)^{99}\sum_{k=56}^{100}\binom{100}{k}\right]\\
&\approx -0.68\cdot a.
\end{align*}
This demonstrates that a player should ultimately expect a significant loss.

\section{Variations}\label{sec:variations}
The previous analysis shows that the participants of Elsberg's bet will experience a significant loss on average. Thus the question arises which rule or parameter underlying the gambling leads to this unfair situation and how the framework can be adjusted in order to make the gambling more (or even less) advantageous for the  participants. In the following, we will study the influence of different parameters of Elsberg's game of chance (to which we also refer as a \enquote{bet}, \enquote{gamble} or simply a \enquote{game}), or rather of the version of it employing our specific payout rule, starting with the number  $n$ of rounds.

\subsection{Number of rounds}

We will now analyse the influence of the number $n$ of rounds  on the expected net profit. \medskip

For $n=1$, the behaviour of the gamble coincides with the players' perception. The expected net profit is then given by $G(a,1) = 0.3\cdot a$. For $n=2$, the expected net profit is still positive, given by $G(a,2) = 0.4\cdot a$. However, the final score is only larger than $100$ if $k=2$. This event occurs with probability $\frac{1}{4}$ implying that the probability of a loss is given by $\frac{3}{4}$. After $n=6$ rounds, the expected net profit is negative for the first time. Although $G(a,7)$ is positive again, the expected net profit is strictly negative for $n\geq 8$. Figure \ref{fig erw gewinn n variabel} illustrates the expected net profit after $n$ rounds and shows an interesting behaviour. While the value $G(a,n)$ is not monotonic due to some jumps, we can clearly see a decreasing trend. This leads to the conjecture that the expected net profit converges to $-a$ as $n\to\infty$ (see Figure \ref{fig erw gewinn konvergent}). A formal proof of the asymptotic behaviour can be found below in Proposition \ref{prop asymptotischer gewinn n to inf}.

\medskip

\begin{figure}
\centering
\includegraphics[scale=1]{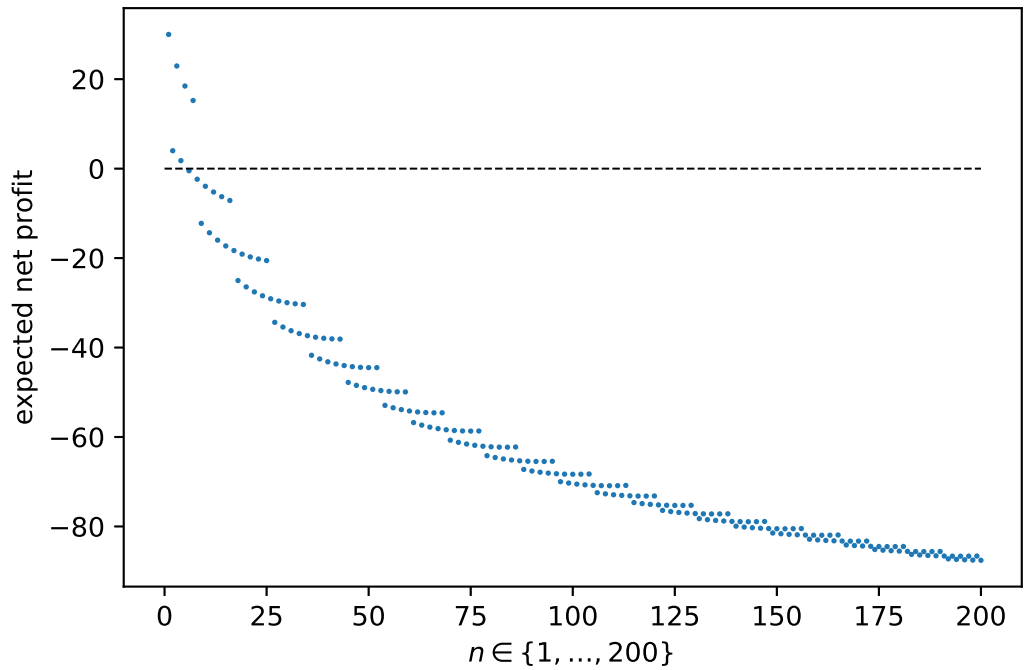}
\caption{Illustration of the expected net profit in Elsberg's bet after $n$ rounds ($n\in \{1,\ldots,200\}$) for an initial stake of $a=100$ euros.}
\label{fig erw gewinn n variabel}
\end{figure}

\begin{figure}
\centering
\includegraphics[scale=1]{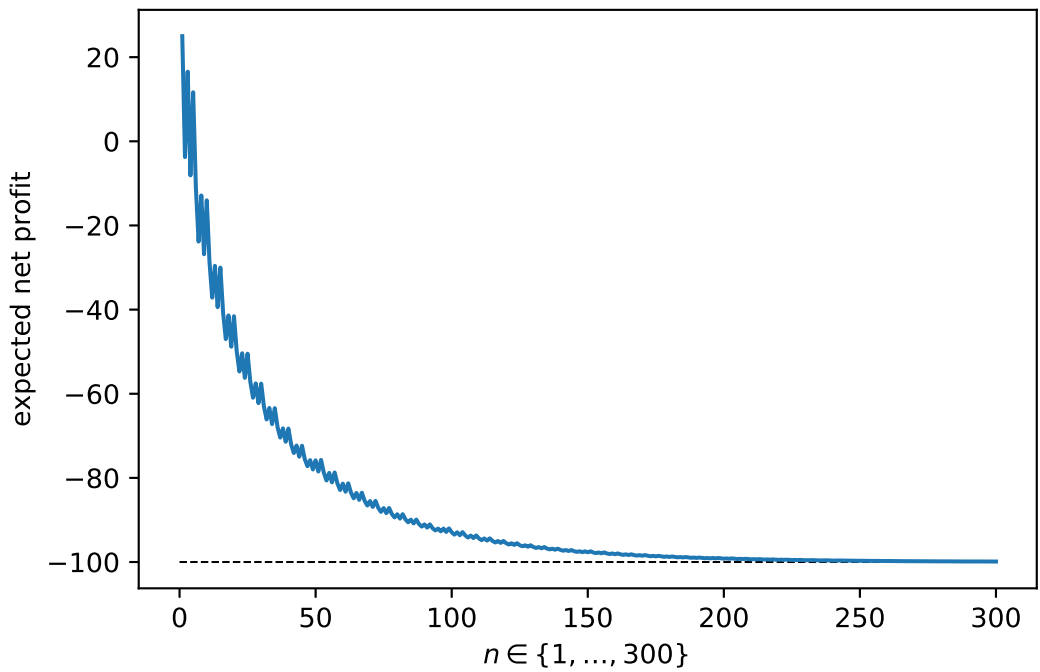}
\caption{Illustration of the expected net profit after $n$ rounds using an initial stake of $a=100$ (euros) in Elsberg's gamble.}
\label{fig erw gewinn konvergent}
\end{figure}

In general, the expected net profit after $n$ rounds with the initial stake $a$ is given by
\begin{equation}
G(a,n)\coloneqq a\cdot\left[-1+ \sum_{k=0}^{k_0(n)}\binom{n}{k}0.75^k\cdot 0.3^{n-k}+\left(\frac{1}{2}\right)^{n-1}\sum_{k=k_0(n)+1}^{n}\binom{n}{k}\right], \label{erw gewinn basic}
\end{equation}
where
$$
k_0(n)\coloneqq\left\lfloor n\cdot \frac{\ln 5-\ln 3}{\ln 5 - \ln 2}\right\rfloor
$$
represents the boundary between winning and losing events. In the definition of $k_0$, $\lfloor \cdot \rfloor$ denotes the floor function defined as $\lfloor x \rfloor = \max\{n\in \mathbb{Z}:\, n\leq x\}$, $x\in \mathbb{R}$. The logarithmic expression in the previous definition of $k_0$ is approximately given by
$$
\frac{\ln 5-\ln 3}{\ln 5 - \ln 2}\approx 0.5574929501.
$$

\bigskip

\noindent
We will now prove the previously mentioned conjecture regarding the asymptotic behaviour of the expected net profit using concentration inequalities. As usual, we denote a random variable $X$ with a binomial distribution with parameters $n\in \mathbb{N}$ and $p\in [0,1]$ by $X\sim \mathrm{Bin}(n,p).$

\begin{prop}\label{prop asymptotischer gewinn n to inf}
For any $a>0$,  $G(a,n)\rightarrow -a$ as $n\rightarrow \infty$ with an exponential rate of convergence.
\end{prop}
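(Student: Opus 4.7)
The plan is to work directly from the explicit formula \eqref{erw gewinn basic} and show that each of the two binomial sums appearing in the bracket tends to zero exponentially in $n$, so that the bracket converges to $-1$ and $G(a,n)\to -a$ exponentially. Write the bracket as $-1+S_1(n)+S_2(n)$ with
$$
S_1(n) := \sum_{k=0}^{k_0(n)}\binom{n}{k}(0.75)^k(0.3)^{n-k}, \qquad S_2(n) := 2^{-(n-1)}\sum_{k=k_0(n)+1}^{n}\binom{n}{k},
$$
and set $c:=(\ln 5-\ln 3)/(\ln 5-\ln 2)\approx 0.5575$, so that $k_0(n)/n\to c$ and $c>1/2$.

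First I would handle $S_2(n)$, which is easy: $S_2(n)=2\,\BP(X_n>k_0(n))$ for $X_n\sim\Bin(n,1/2)$, and since $c>1/2$, Hoeffding's inequality (or, more sharply, the Chernoff bound) gives $S_2(n)\le 2\exp(-nD(c\|1/2)+o(n))$, where $D(p\|q):=p\ln(p/q)+(1-p)\ln((1-p)/(1-q))$ denotes the Kullback--Leibler divergence; as $c\neq 1/2$, one has $D(c\|1/2)>0$ and $S_2(n)\to 0$ exponentially.

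The real work concerns $S_1(n)$, whose summands do not come from a probability distribution. The trick is to renormalise using $0.75+0.3=1.05$: factoring $1.05^n$ out yields
$$
S_1(n) = 1.05^n\sum_{k=0}^{k_0(n)}\binom{n}{k}(5/7)^k(2/7)^{n-k} = 1.05^n\cdot\BP(Y_n\le k_0(n))
$$
for $Y_n\sim\Bin(n,5/7)$. Since $5/7>c$, the Chernoff bound on the lower binomial tail gives $\BP(Y_n\le k_0(n))\le\exp(-nD(c\|5/7)+o(n))$, hence $S_1(n)\le\exp\bigl(n[\ln 1.05-D(c\|5/7)]+o(n)\bigr)$. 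The crux is to verify $\ln 1.05<D(c\|5/7)$. This is where the definition of $c$ enters decisively: by construction $1.5^c\cdot 0.6^{1-c}=1$, equivalently $0.75^c\cdot 0.3^{1-c}=1/2$. A short algebraic expansion of both KL divergences, using this identity to eliminate the cross terms involving $\ln 5$ and $\ln 2$, collapses to the clean relation
$$
D(c\|5/7)-\ln 1.05 \;=\; D(c\|1/2) \;>\; 0,
$$
so $S_1(n)$ in fact decays at the same exponential rate $D(c\|1/2)$ as $S_2(n)$.

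The main obstacle is the tight competition in $S_1(n)$ between the factor $1.05^n$---precisely the \enquote{$5\%$ average gain per round} that persuades the gamblers to play---and the decay of the binomial lower tail. The decisive point is that the threshold $c$ is defined by exactly the equation which ensures the competition is won; once this is recognised, both sums are seen to decay at the common rate $D(c\|1/2)$, the usual large-deviations rate for a fair-coin binomial deviated to level $c$, and this rate governs the exponential convergence claimed in the proposition.
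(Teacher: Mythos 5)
Your proof is correct, and its skeleton coincides with the paper's: the same splitting of the bracket in \eqref{erw gewinn basic} into the two sums, and the same renormalisation of the first sum by $1.05^n$ so that it becomes $1.05^n\,\BP(Y_n\le k_0(n))$ with $Y_n\sim\Bin(n,5/7)$. Where you genuinely differ is in how the competition between $1.05^n$ and the lower tail is decided. The paper applies Hoeffding's inequality, obtaining the bound $1.05^n\exp(-2(5/7-0.5575)^2n)\le (1.05/1.050392\ldots)^n$, and thus wins by a numerically thin margin; you instead invoke the relative-entropy (Chernoff) tail bound, essentially \eqref{eq chernoff-hoeffding}, and verify $\ln 1.05<D(c\,\|\,5/7)$ exactly through the identity $D(c\,\|\,5/7)-\ln 1.05=D(c\,\|\,1/2)$. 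That identity is correct: from $c\ln(5/2)=\ln(5/3)$ one gets $c\ln 5+(1-c)\ln 2=\ln(10/3)$, hence $D(c\,\|\,5/7)-D(c\,\|\,1/2)=\ln 7-\ln(10/3)-\ln 2=\ln(21/20)$. Your route buys an exact, non-numerical verification and identifies the true common exponential rate $D(c\,\|\,1/2)$; the paper's Hoeffding/Okamoto route is more elementary but rests on the tight numerical comparison $1.0504\ldots>1.05$ (note also that for $S_2$ the KL rate you quote requires the Chernoff bound, Hoeffding alone giving only the weaker quadratic exponent, which is however enough for the proposition). Incidentally, your rate identity can be short-circuited: for $k\le k_0(n)$ one has $1.5^k0.6^{n-k}\le 1$ by the very definition of $k_0$, so $S_1(n)\le\BP\left(\Bin(n,\tfrac12)\le k_0(n)\right)$ directly, which makes the appearance of the fair-coin rate $D(c\,\|\,1/2)$ in both sums immediate and dispenses with the $1.05^n$ bookkeeping altogether.
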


\begin{proof} In the following, we will prove that both sums in \eqref{erw gewinn basic} converge to zero as $n$ tends to infinity.

Let $X_n\sim \text{Bin}\left(n,\frac{5}{7}\right)$. Then we get
\begin{align}
\sum_{k=0}^{k_0(n)}\binom{n}{k}0.75^k\cdot 0.3^{n-k}
&=1.05^n\sum_{k=0}^{k_0(n)}\binom{n}{k}\left(\frac{5}{7}\right)^k\cdot \left(\frac{2}{7}\right)^{n-k}\allowdisplaybreaks\nonumber\\
&=1.05^n\cdot \mathbb{P}(X_n\le k_0(n))\allowdisplaybreaks\nonumber\\
&\le 1.05^n\cdot \mathbb{P}\left(X_n-\frac{5}{7}\, n\le -\left(\frac{5}{7}-0.5575\right)n\right)\allowdisplaybreaks\nonumber\\
&\le 1.05^n\cdot \exp\left(-2\cdot \left(\frac{5}{7}-0.5575\right)^2n\right)\allowdisplaybreaks\nonumber\\
&\le 1.05^n\cdot 1.050392097^{-n}\to 0\quad \text{ as $n\to\infty$},
\label{eqT1}
\end{align}
where we used Hoeffding's inequality \cite[Theorem 2.8]{BLM}, \cite{DP} in the second to last step. (Alternatively, Chernoff's inequality can be used, at the cost of an additional factor $2$.)

\medskip

Now let $Y_n\sim \text{Bin}\left(n,\frac{1}{2}\right)$. Using $k_0(n)\ge \lfloor 0.55n\rfloor\ge 0.55n-1$, we obtain the following upper bound on the second sum
\begin{align}
\left(\frac{1}{2}\right)^{n-1}\cdot \sum_{k=k_0(n)+1}^{n}\binom{n}{k} &= 2 \cdot \mathbb{P}(Y_n \geq k_0(n)+1)\nonumber\\
&\leq  2\cdot \mathbb{P}(Y_n\ge 0.55 n)\nonumber\\
&\le 2\cdot  \mathbb{P}(Y_n-0.5n\ge  0.05 n)\nonumber\\
&\le 2\exp\left(-2\cdot 0.05^2\cdot n\right)\to 0\quad \text{ as $n\to\infty$},\label{eqT2}
\end{align}
where we used Okamoto's inequality \cite{Oka}, \cite[Ex. 2.12]{BLM} in the last step.\medskip

An application of the upper bounds  \eqref{eqT1} and \eqref{eqT2} in \eqref{erw gewinn basic} completes the  proof.
\end{proof}

\medskip

\noindent
{\bf Remark:} If we are not interested in  the rate of convergence in Proposition \ref{prop asymptotischer gewinn n to inf}, the second part of the proof can be simplified by using the fact that $Y_n/n$ converges in probability to zero. However, it seems that for the first part of the argument some finer tools are required.

\medskip

An important feature of Elsberg's game of chance is the bounded payout in the event of a win. If the payout in the case of a loss would also apply in the winning scenarios, the expected net profit would be given by
\begin{equation}
\left(\frac{1}{2}\right)^{n}\sum_{k=0}^n \binom{n}{k}\left(1.5^k\cdot 0.6^{n-k}-1\right)\cdot a=a\cdot\left(1.05^n-1\right). \label{erw gewinn ohne deckelung basic}
\end{equation}

This is exactly the value the gamblers expected intuitively.

\subsection{Up and down}

In the following subsection, we will study the influence of the percentages used to modify the current score after each round. In Elsberg's game of chance, the score is increased by $50\%$ or decreased by $40\%$ if the coin shows \enquote{heads} or \enquote{tails}, respectively. We will substitute these percentages by some general percentages $a_u$ and $a_d$. The indices $u$ and $d$ stand for \enquote{up} and \enquote{down}.\medskip

Hence, the updated modification (increase or decrease) of the current score in each round is given as follows.

\begin{enumerate}
\item[3'.] If the coin shows \enquote{heads}, the current score is increased by $a_u\%$.\\
    Otherwise, the current score is reduced by $a_d\%$.
\end{enumerate}

In the game  introduced by Elsberg, the values $a_u$ and $a_d$ are apparently given by
$$ a_u = 50 \quad\text{and}\quad a_d = 40.$$

Since it will be more convenient to use fractions instead of percentages, we introduce the following factors
$$ u \coloneqq 1 + \frac{a_u}{100} \quad\text{and}\quad d \coloneqq 1 - \frac{a_d}{100}. $$

Using the previously defined factors $u$ and $d$,   we obtain more generally (cf. \eqref{kapital ende}) for the final score after $n$ rounds
\begin{equation}
\widetilde{b}(k) = 100 \cdot u^k \cdot d^{n-k}. \label{kapital ende neu}
\end{equation}

Here again, $k\in \{0,1,\ldots,n\}$ denotes the number of coin tosses showing \enquote{heads} among the $n$ independent  repetitions. Then,   the expected net profit under the updated modification rule 3' is given by
\begin{equation}
\widetilde{G}(a,n,u,d)= a \cdot \left(\frac{1}{2} \right)^n \left[\sum_{k=0}^{\widetilde{k}_0(n,u,d)} \binom{n}{k} \left(   {u}^k d^{n-k} - 1 \right) + \sum_{k = \widetilde{k}_0(n,u,d) + 1}^n \binom{n}{k} \right].\label{erw gewinn neu}
\end{equation}
As before, the quantity
\begin{equation}
\widetilde{k}_0(n,u,d) \coloneqq \left\lfloor -n \cdot \frac{\ln(d)}{\ln(u)-\ln(d)} \right\rfloor
\end{equation}
represents the boundary between winning and losing events. More precisely, a player wins if and only if $k> \widetilde{k}_0(n,u,d)$.\medskip

\noindent
{\bf General assumption:} In the following, we will always assume that $0 < d \leq 1 \leq u$ and $u \neq d$. This is not a loss of generality since other choices of $u$ and $d$ are not reasonable in the given situation.

\medskip

Intuitively, one would expect that an increase of $u$ or $d$ results in an advantage for the participants of the game of chance. The following Figures \ref{fig gewinn unbeschr d variabel} and \ref{fig gewinn unbeschr u variabel} support this conjecture. Both figures were generated using an initial stake of $a=100$ (euros) in a game of $n=100$ rounds. In Figure \ref{fig gewinn unbeschr d variabel}, we fixed the factor $u$ and illustrated the expected net profit in terms of $d$. In Figure \ref{fig gewinn unbeschr u variabel}, we treated the opposite situation where $d$ is fixed and the expected net profit is computed in terms of $u$. In both figures, the expected net profit in the situation described by Elsberg is marked by a red dot. \medskip

Both figures show that the expected net profit increases with the variable parameter $d$ and $u$, respectively. Moreover, both figures illustrate that the expected net profit approaches $-a$ or $a$ for small or large choices of $d$ and $u$, respectively. \medskip

Based on Figures \ref{fig gewinn unbeschr d variabel} and \ref{fig gewinn unbeschr u variabel}, one  expects the existence of choices for $u$ and $d$ that result in a fair game. In this context, we say that a game is \enquote{fair}, if the expected net profit equals zero. Numerically, it is possible to determine pairs $(d,u)$ which define a fair game. The blue line in Figure \ref{fig faire tupel} contains tuples $(d,u)$ resulting in a fair game. Tuples below the blue line are advantageous for the organiser of the game (the croupier) while tuples above the blue line result in a game which is advantageous for the participants. \medskip

For comparison, we also illustrated the function $u = d^{-1}$ (orange) suggesting the conjecture that  asymptotically as $n\to\infty$, the fair tuples $(d,u)$ are determined by the relation $u = d^{-1}$. We will prove this conjecture in Theorem \ref{Thm1} below. \medskip

Figure \ref{fig faire tupel} also contains a green dot marking the tuple $(0.6, 1.5)$ used in the game of chance introduced by Elsberg. This illustration shows once again that Elsberg's game of chance is unfair to the participants in the game.

\begin{figure}[th!]
\centering
\includegraphics[scale=1]{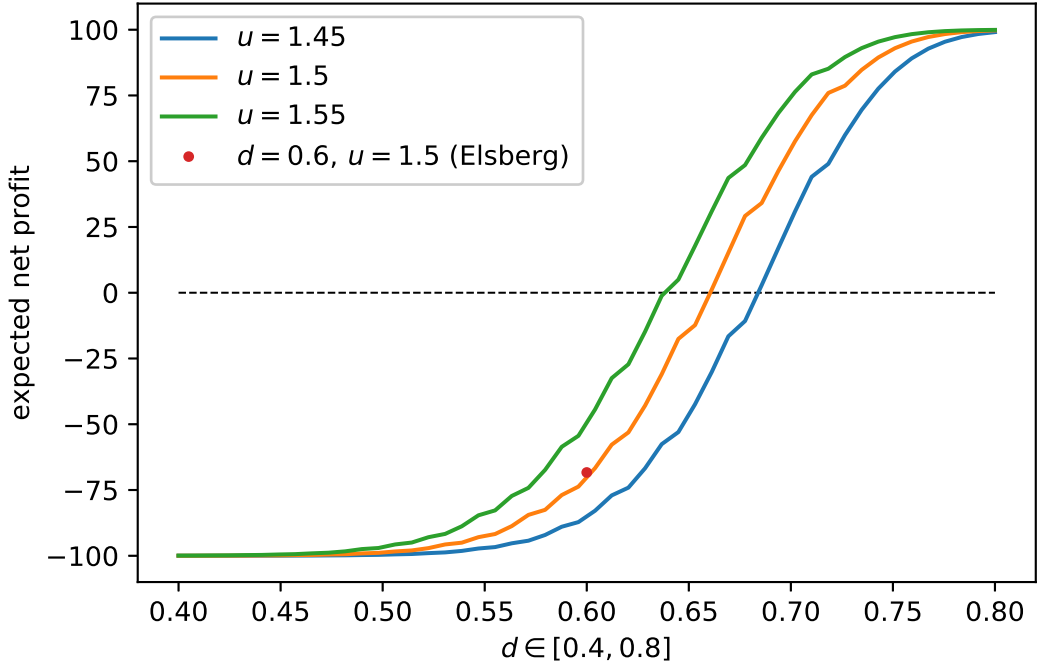}
\caption{Expected net profit after $n=100$ rounds with an initial stake of $a=100$ in terms of the \enquote{down factor} $d$ for various choices of the \enquote{up factor} $u$.}
\label{fig gewinn unbeschr d variabel}
\end{figure}

\begin{figure}[ht!]
\centering
\includegraphics[scale=1]{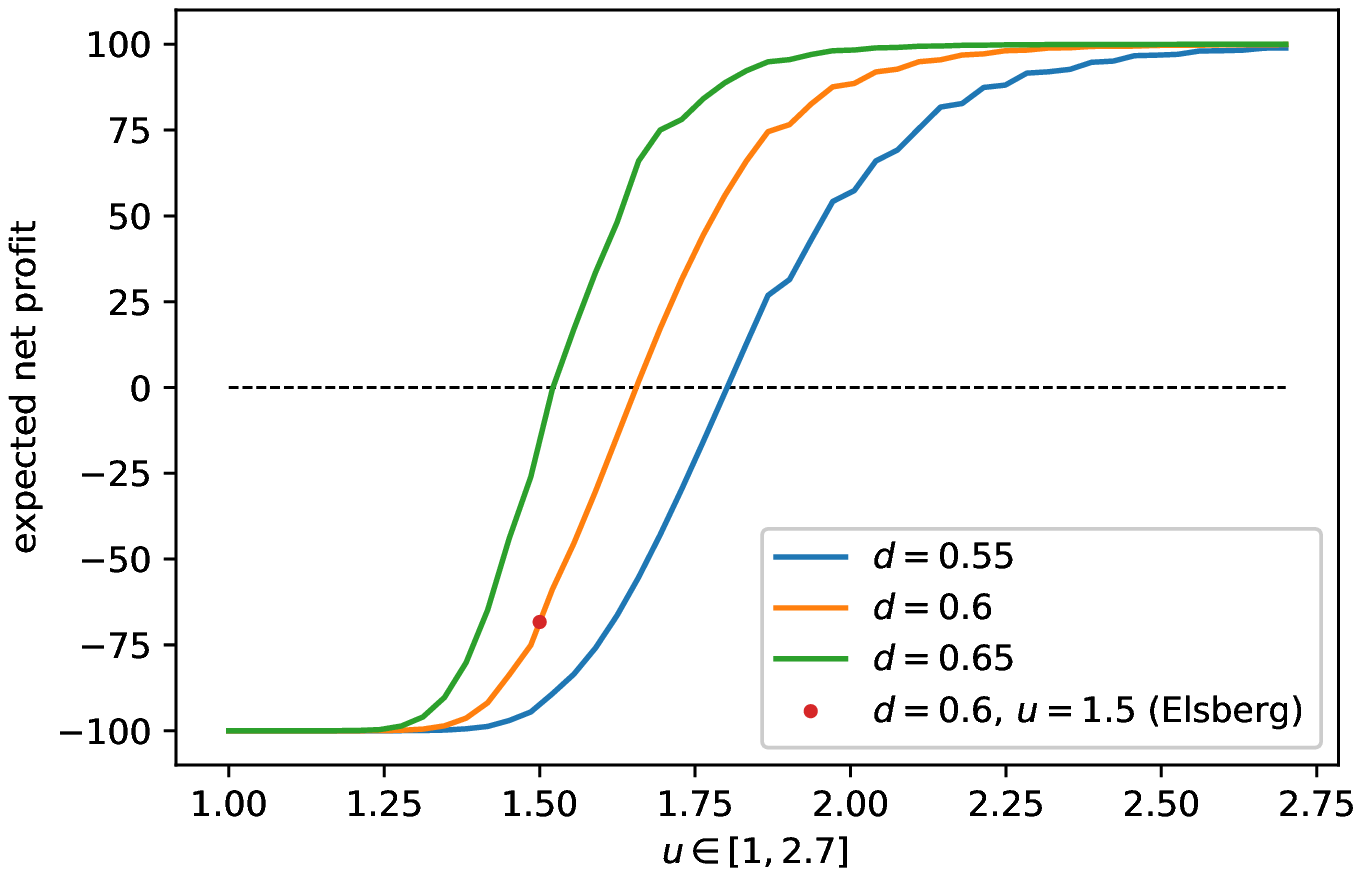}
\caption{Expected net profit after $n=100$ rounds with an initial stake of $a=100$ in terms of the \enquote{up factor} $u$ for various choices of the \enquote{down factor} $d$.}
\label{fig gewinn unbeschr u variabel}
\end{figure}

\begin{figure}[hb!]
\centering
\includegraphics[scale=1]{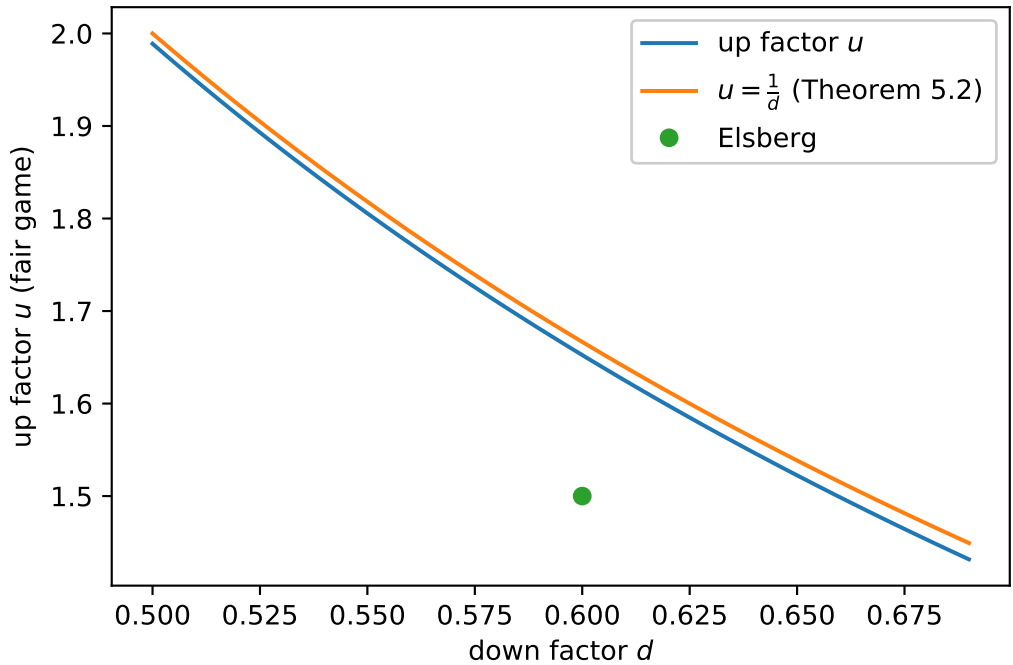}
\caption{Pairs $(d,u)$ resulting in a fair game for  $a=100$ and $n=100$ (blue curve).}
\label{fig faire tupel}
\end{figure}

\subsection{Gambling on the edge}\label{subsec betting on the edge}

As already mentioned in the previous subsection, the conjecture which is supported by the illustration of our numerical results in Figure \ref{fig faire tupel} can be proven exactly (asymptotically as $n\to\infty$). This leads to an interesting threshold phenomenon, where $\{(u,d)\in [1,\infty)\times (0,1]:ud=1,\ u\neq d\}$ describes the boundary between tuples leading to a game of chance that is advantageous or disadvantageous for the participants in the game. All tuples on the boundary lead to a fair game.\medskip

Since, according to the property $\widetilde{G}(a,n,u,d) = a \cdot \widetilde{G}(1,n,u,d)$, the expected net profit $\widetilde{G}(a,n,u,d)$ is proportional to the initial stake $a$, we define $G(n,u,d)\coloneqq \widetilde{G}(1,n,u,d)$ for notational simplicity. Then, it follows that $G(n,u,d)$ can be written as
$$
G(n,u,d)=-1+A(n,u,d)+B(n,u,d),
$$
where
$$
A(n,u,d)\coloneqq \left(\frac{1}{2} \right)^n\cdot \sum_{k=0}^{\widetilde{k}_0(n,u,d)}\binom{n}{k} {u}^k d^{n-k},  \qquad  B(n,u,d)\coloneqq \left(\frac{1}{2} \right)^{n-1}\cdot \sum_{k=\widetilde{k}_0(n,u,d)+1}^n\binom{n}{k}
$$
and
$$
\widetilde{k}_0(n,u,d)=\left\lfloor n \cdot \frac{\ln\left(\frac{1}{d}\right)}{\ln\left(\frac{u}{d}\right)} \right\rfloor,
$$
implying that $\widetilde{k}_0(n,u,d)\in\{0,\ldots,n\}$.

\begin{theorem}\label{Thm1}
Let $0<d\le 1\le u$ and $u\neq d$. Then
$$
\lim_{n\rightarrow \infty} G(n,u,d)= \begin{cases}
1,&\text{if } \, ud>1,\\
0,&\text{if } \, ud=1,\\
-1,&\text{if } \, ud<1.
\end{cases}
$$
%für $n\to\infty$.
\end{theorem}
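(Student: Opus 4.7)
I would start from the decomposition $G(n,u,d) = -1 + A(n,u,d) + B(n,u,d)$ introduced in the excerpt and prove separately that $A \to 0$ in every regime, while $B$ tends to $2,\,1,\,0$ according to $ud > 1,\, ud = 1,\, ud < 1$. The key parameter is the threshold fraction $r := \ln(1/d)/\ln(u/d)$; since $\widetilde k_0(n,u,d) = \lfloor r n \rfloor$, one has $\widetilde k_0/n \to r$, and $r$ is precisely the unique solution of $u^r d^{1-r}=1$. A short manipulation of $2\ln(1/d) \gtreqless \ln(u/d)$, equivalently $1/d^{2} \gtreqless u/d$, yields the trichotomy $r > 1/2 \iff ud < 1$, $\ r = 1/2 \iff ud = 1$, $\ r < 1/2 \iff ud > 1$, which links the position of the threshold to the sign of $ud - 1$.

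\textbf{Analysis of $B$.} Writing $B = 2\,\mathbb{P}(Y_n > \widetilde k_0)$ with $Y_n \sim \mathrm{Bin}(n,1/2)$, the two non-critical cases are immediate: for $ud \ne 1$ the fraction $\widetilde k_0/n \to r$ is bounded away from $1/2$, so Hoeffding's inequality gives $B \to 2$ when $r < 1/2$ and $B \to 0$ when $r > 1/2$. When $ud = 1$ we have $\widetilde k_0 = \lfloor n/2 \rfloor$, and the symmetry of $\mathrm{Bin}(n,1/2)$ gives $B = 1$ exactly for odd $n$, while for even $n$ it gives $B = 1 - \mathbb{P}(Y_n = n/2)$, and $\mathbb{P}(Y_n = n/2) = O(1/\sqrt{n}) \to 0$ by Stirling; in either case $B \to 1$.

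\textbf{Analysis of $A$ away from criticality.} Exponential tilting rewrites $A$ as
\[
A(n,u,d) = \left(\frac{u+d}{2}\right)^{n}\mathbb{P}(X_n \le \widetilde k_0), \qquad X_n \sim \mathrm{Bin}\!\left(n, \tfrac{u}{u+d}\right),
\]
and I set $c := u/(u+d)$. I then distinguish two sub-cases by the size of $(u+d)/2$. If $(u+d)/2 < 1$, the trivial estimate $A \le ((u+d)/2)^n$ already yields exponential decay. If $(u+d)/2 \ge 1$, strict convexity of $x \mapsto x\ln x$ applied to $u \ne d$ gives $u\ln u + d\ln d > (u+d)\ln((u+d)/2) \ge 0$, which rearranges to $c > r$; Chernoff's inequality for the lower tail of $X_n$, together with the identity $I_c(r) = \ln(u+d) - H(r)$ (obtained from $r\ln u + (1-r)\ln d = 0$, with $H$ the binary entropy), then produces the clean bound
\[
A \le \exp\bigl(n(H(r) - \ln 2)\bigr),
\]
which decays exponentially whenever $r \ne 1/2$, since $H(r) < \ln 2$ strictly off the maximum.

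\textbf{The critical case $ud = 1$ and conclusion.} The genuine obstacle is precisely $ud = 1$: then $r = 1/2$, so $H(r) - \ln 2 = 0$ and the Chernoff bound collapses to the useless $A \le 1$, while AM-GM forces $(u+d)/2 > 1$ so the trivial estimate fails as well. I would overcome this by a sharper, local-type estimate. Since $u > d$ forces $c > 1/2$, the consecutive ratios $\mathbb{P}(X_n = k-1)/\mathbb{P}(X_n = k) \to (1-c)/c < 1$ uniformly for $k$ near $n/2$, so the partial sum $\mathbb{P}(X_n \le \lfloor n/2 \rfloor)$ is dominated, up to a constant, by its largest term; Stirling then yields $\mathbb{P}(X_n = \lfloor n/2 \rfloor) \le C\, n^{-1/2}\,((u+d)/2)^{-n}$. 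Multiplying by $((u+d)/2)^n$ gives $A = O(n^{-1/2}) \to 0$. Once $A \to 0$ is established in all three regimes, the trichotomy $G \to 1,\, 0,\, -1$ follows directly from $G = -1 + A + B$ and the three limits of $B$.
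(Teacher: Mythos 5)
Your argument is correct, and its skeleton coincides with the paper's: the same decomposition $G(n,u,d)=-1+A(n,u,d)+B(n,u,d)$, the same threshold fraction $r=\ln(1/d)/\ln(u/d)$ with the trichotomy $r\gtrless \tfrac12 \iff ud\lessgtr 1$, and the same treatment of $B$ (Hoeffding-type bounds off criticality; symmetry of $\Bin(n,\tfrac12)$ plus a Stirling estimate for the central term when $ud=1$), which is exactly Lemma \ref{L2}. Where you genuinely diverge is in the analysis of $A$ (the paper's Lemma \ref{L1}): the paper distinguishes four cases ($u+d<2$; $u+d=2$; $u+d>2$ with $ud\neq1$; $ud=1$), invoking its auxiliary inequality Lemma \ref{ungl1} to place $r$ below the relevant mean and then computing the limit of the quantity $\omega(n,u,d)$ coming from the Chernoff--Hoeffding bound \eqref{eq chernoff-hoeffding}. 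You merge the two middle cases: convexity of $x\mapsto x\ln x$ gives $r<c=u/(u+d)$ whenever $(u+d)/2\ge1$, and the identity $I_c(r)=\ln(u+d)-H(r)$ (valid because $u^rd^{1-r}=1$, with $H(r)=-r\ln r-(1-r)\ln(1-r)$) turns the same Chernoff--Hoeffding bound directly into $A\le \exp\bigl(n(H(r)-\ln 2)\bigr)$; this is precisely the paper's asymptotic bound, since $\rho(u,d)=2e^{-H(r)}$ and $\omega\to\rho^{-1}$, but you reach it without Lemma \ref{ungl1} and without the limit computation for $\omega$. In the critical case $ud=1$, your largest-term/geometric-domination estimate for the tilted binomial $\Bin\bigl(n,\tfrac{u}{u+d}\bigr)$ (ratio $(1-c)/c=d^2$) is in substance the paper's Case 4 computation via $\binom{2m}{m}\le 2^{2m}/\sqrt{\pi m}$ and a geometric series in $d^2$, and yields the same rate $A=O(n^{-1/2})$. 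The payoff of your route is a shorter, unified case analysis with a clean entropy formula; the paper's version keeps the cases separate, which lets it display the exponential rates (and the simpler Hoeffding argument at $u+d=2$) explicitly.
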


The proof of Theorem \ref{Thm1} will be split into two auxiliary results concerning the asymptotic behaviour of $A(n,u,d)$ and $B(n,u,d)$ as $n\to\infty$.

\begin{lemma}\label{L1}
Let $0<d\le 1\le u$ and $u\neq d$. Then
%, the limit of $A(n,u,d)$ as $n\rightarrow \infty$ is given by
$$
\lim_{n\rightarrow \infty} A(n,u,d)=0.
$$
\end{lemma}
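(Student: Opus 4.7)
The strategy is the same as in Proposition~\ref{prop asymptotischer gewinn n to inf}: recast $A(n,u,d)$ as a biased-binomial tail probability multiplied by an exponential growth factor, and combine a concentration inequality with that growth factor. Setting $p^{\ast}:=u/(u+d)$ and $X_n\sim\mathrm{Bin}(n,p^{\ast})$, factoring $((u+d)/2)^n$ out of each summand yields
\begin{equation*}
A(n,u,d)=\left(\tfrac{u+d}{2}\right)^n\mathbb{P}\bigl(X_n\le\widetilde{k}_0(n,u,d)\bigr).
\end{equation*}
With $\alpha:=\ln(1/d)/\ln(u/d)\in[0,1]$ the threshold rewrites as $\widetilde{k}_0(n,u,d)=\lfloor n\alpha\rfloor$, and by construction $u^{\alpha}d^{1-\alpha}=1$.

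I would then split into the two cases $u+d<2$ and $u+d\ge 2$. The first is immediate, since $((u+d)/2)^n\to 0$ and the trivial probability bound closes the argument. For $u+d\ge 2$, I would first verify $\alpha<p^{\ast}$. This amounts to $u\log u+d\log d>0$, which follows from the strict convexity of $x\mapsto x\log x$: using $x\log x>x-1$ for $x\ne 1$ together with $(u,d)\ne(1,1)$ (because $u\ne d$), one obtains
\begin{equation*}
u\log u+d\log d>(u-1)+(d-1)=u+d-2\ge 0.
\end{equation*}
Chernoff's bound applied to $X_n$ then gives $\mathbb{P}(X_n\le n\alpha)\le\exp(-nD(\alpha\|p^{\ast}))$ with $D$ the Kullback--Leibler divergence, and a direct computation exploiting $\alpha\log u+(1-\alpha)\log d=0$ collapses the total exponent to the clean identity
\begin{equation*}
\log\tfrac{u+d}{2}-D(\alpha\|p^{\ast})=H(\alpha)-\log 2,\qquad H(\alpha):=-\alpha\log\alpha-(1-\alpha)\log(1-\alpha).
\end{equation*}
Hence $A(n,u,d)\le\exp(n(H(\alpha)-\log 2))$, giving exponential decay to $0$ whenever $\alpha\ne 1/2$.

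The only remaining case is $\alpha=1/2$, that is $d=1/u$ with $u>1$. Here $u+d>2$ by strict AM--GM, but the Chernoff exponent collapses to $0$, so the above bound yields only $A\le 1$; this is the main obstacle and requires a polynomial refinement. The plan is to invoke unimodality: with $d=1/u$, the summands $\binom{n}{k}u^{2k-n}$ are increasing on $\{0,\ldots,\lfloor n/2\rfloor\}$ because the mode of the relevant biased binomial lies above $n/2$, and the ratio of consecutive summands at $k=\lfloor n/2\rfloor$ is at most $1/u^2<1$. Summing the resulting geometric tail and applying Stirling's formula to $2^{-n}\binom{n}{\lfloor n/2\rfloor}=O(n^{-1/2})$ then yields $A(n,u,1/u)=O(n^{-1/2})\to 0$. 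The switch from exponential to merely polynomial decay at $ud=1$ is precisely the threshold behaviour that is subsequently exploited in the proof of Theorem~\ref{Thm1}.
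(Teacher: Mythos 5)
Your argument is correct, and its skeleton coincides with the paper's: you factor out $\left(\frac{u+d}{2}\right)^n$ to rewrite $A(n,u,d)$ as a scaled lower-tail probability of $\Bin\!\left(n,\frac{u}{u+d}\right)$, dispose of the non-critical parameters by a Chernoff-type bound, and treat the boundary $ud=1$ separately via a geometric-series bound together with the central binomial coefficient and Stirling; that last step is essentially identical to the paper's Case 4 (which merely splits into even and odd $n$), and your $O(n^{-1/2})$ rate matches the paper's remark. Where you genuinely differ is in the non-critical regime: the paper distinguishes $u+d<2$, $u+d=2$ and $u+d>2$ with $ud\neq 1$, using Hoeffding's inequality in the second case, the bound \eqref{eq chernoff-hoeffding} with the quantities $\omega(n,u,d)$ and $\rho(u,d)$ in the third, and the auxiliary Lemma \ref{ungl1} to place $\widetilde{k}_0(n,u,d)/n$ below the relevant mean. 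You instead merge these cases by writing the Chernoff exponent as a Kullback--Leibler divergence and exploiting $u^{\alpha}d^{1-\alpha}=1$ to collapse the total exponent to $H(\alpha)-\log 2$, which is negative exactly when $\alpha\neq\frac12$, i.e. $ud\neq 1$; the required comparison $\alpha<\frac{u}{u+d}$ you reduce to $u\ln u+d\ln d>0$ via the elementary inequality $x\ln x\ge x-1$, so Lemma \ref{ungl1} is not needed. This buys a shorter, unified argument in which the location of the threshold $ud=1$ is completely transparent, while still giving the exponential rate off the boundary; the paper's more granular route keeps the second case at the level of Hoeffding/law of large numbers and introduces the quantities that are reused verbatim in the biased-coin version (Lemma \ref{L1p}), so it is better aligned with the later sections. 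One small imprecision: in your display you apply the strict inequality $x\ln x>x-1$ to both $u$ and $d$, although one of them may equal $1$; since $u\neq d$ excludes $u=d=1$, at least one term is strictly positive and the conclusion $u\ln u+d\ln d>u+d-2\ge 0$ stands.
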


 \begin{proof}
 \textbf{Case 1:} $u+d<2$. In this case,
$$
A(n,u,d)=\left(\frac{u+d}{2}\right)^n\cdot\sum_{k=0}^{\widetilde{k}_0(n,u,d)}\binom{n}{k}
\left(\frac{u}{u+d}\right)^k\left(\frac{d}{u+d}\right)^{n-k}\le \left(\frac{u+d}{2}\right)^n\to 0
$$
as $n\to\infty$.

\medskip

\textbf{Case 2:}  $u+d= 2$. Since $u\neq d$ is satisfied by assumption, it follows that $d<1$ must hold as well. We can therefore conclude that $2\cdot\left(1-\frac{d}{2}\right)^{1-\frac{d}{2}}\left(\frac{d}{2}\right)^{\frac{d}{2}}>1$ (see Lemma \ref{ungl1} for $x=\frac{d}{2}\in (0,\frac{1}{2}$)), and thus $(2-d)^{2-d}d^d>1$ or equivalently $u^u d^{2-u}>1$. This inequality is in turn  equivalent to
$$
\frac{u}{2}>\frac{\ln\left(\frac{1}{d}\right)}{\ln\left(\frac{u}{d}\right)},
$$
which implies that
\begin{equation}\label{b2}
\frac{1}{n}\widetilde{k}_0(n,u,d)\to \frac{\ln\left(\frac{1}{d}\right)}{\ln\left(\frac{u}{d}\right)}<\frac{u}{2}\quad \text{for }n\to\infty.
\end{equation}
Now let $Y_n\sim\Bin(n,\frac{u}{2})$. The law of large numbers yields
\begin{align*}
A(n,u,d)&=\sum_{k=0}^{\widetilde{k}_0(n,u,d)}\binom{n}{k}
\left(\frac{u}{2}\right)^k\left(\frac{d}{2}\right)^{n-k}
=\BP\left(Y_n\le \widetilde{k}_0(n,u,d)\right)\\
&=\BP\left(\frac{1}{n}Y_n\le \frac{\widetilde{k}_0(n,u,d)}{n}\right)\to 0 \quad\text{as }n\to\infty,
\end{align*}
due to \eqref{b2} and $\frac{1}{n}\mathbb{E} Y_n=\frac{u}{2}$. \medskip

If we use Hoeffding's inequality \cite[Theorem 2.8]{BLM} (or alternatively Chernoff's inequality including an additional factor $2$) and the fact that  $\frac{\widetilde{k}_0(n,u,d)}{n}-\frac{u}{2}<0 $ for sufficiently large $n$ due to \eqref{b2}, it is even possible to verify  an exponential rate of convergence. Using the previously mentioned tools, we get
$$
\BP\left(Y_n-\frac{nu}{2}\le \left(\frac{\widetilde{k}_0(n,u,d)}{n}-\frac{u}{2}\right)n\right)
\le \exp\left(-2\left(\frac{\widetilde{k}_0(n,u,d)}{n}-\frac{u}{2}\right)^2 n\right),
$$
where
$$
\frac{\widetilde{k}_0(n,u,d)}{n}-\frac{u}{2}\to
\frac{\ln\left(\frac{1}{d}\right)}{\ln\left(\frac{u}{d}\right)}
-\frac{u}{2}\eqqcolon \alpha(u,d)<0\quad\text{for }n\to\infty.
$$
Note that $\alpha(u,d)\to 0$ as $d,u \to 1$.

\medskip

\textbf{Case 3:} $u+d>2$ and $ud\neq 1$. It follows that $u>2-d\ge 1$, hence   $u^ud^d> (2-d)^{2-d}d^d\ge1$, and therefore we obtain
\begin{equation}\label{b4}
 \frac{\ln\left(\frac{1}{d}\right)}{\ln\left(\frac{u}{d}\right)}<\frac{u}{u+d}<1.
\end{equation}
For sufficiently large $n$ it also follows that
\begin{equation}\label{b3}
1-\frac{\widetilde{k}_0(n,u,d)}{n}>\frac{d}{u+d}\quad\text{or equivalently }\quad
\frac{\widetilde{k}_0(n,u,d)}{n}<\frac{u}{u+d}.
\end{equation}

Let $\xi_n\sim \Bin (n,\frac{u}{u+d})$ and $\zeta_n\coloneqq n-\xi_n\sim\Bin(n,\frac{d}{u+d})$. We will now use an inequality (see \cite[Ex. 2.11]{BLM} or \cite{DP}) that usually arises during the derivation of Chernoff's inequality. It states that if
  $S_n \sim \mathrm{Bin}(n,p)$ and $y\in (p,1)$, then
 \begin{equation}
     \mathbb{P}(S_n \geq ny) \leq \left(\frac{(1-p)^{1-y}p^y}{(1-y)^{1-y}y^y} \right)^n. \label{eq chernoff-hoeffding}
 \end{equation}

 (Remark: In the Wikipedia article \cite{wiki} this step is referred to as the \enquote{Chernoff-Hoeffding theorem}.) \medskip

Using \eqref{b3} and \eqref{eq chernoff-hoeffding} for $p=\frac{d}{u+d}$ and $y = 1 - n^{-1}\widetilde{k}_0(n,u,d)$, it follows that
\begin{align*}
A(n,u,d)&=\left(\frac{u+d}{2}\right)^n\BP\left(\xi_n\le
\widetilde{k}_0(n,u,d)\right)
=\left(\frac{u+d}{2}\right)^n\BP\left(\zeta_n\ge n-
\widetilde{k}_0(n,u,d)\right)\\
&=\left(\frac{u+d}{2}\right)^n\BP\left(\zeta_n\ge \left(1-
\frac{\widetilde{k}_0(n,u,d)}{n}\right)n\right)\\
&\le \left(\frac{u+d}{2}\right)^n
\left[\frac{\left(\frac{u}{u+d}\right)^{\frac{\widetilde{k}_0(n,u,d)}{n}}
\left(\frac{d}{u+d}\right)^{1-\frac{\widetilde{k}_0(n,u,d)}{n}}}
 {\left(\frac{\widetilde{k}_0(n,u,d)}{n}\right)^{\frac{\widetilde{k}_0(n,u,d)}{n}}
\left(1-\frac{\widetilde{k}_0(n,u,d)}{n}\right)^{1-\frac{\widetilde{k}_0(n,u,d)}{n}}}
\right]^n\\
&=\omega(n,u,d)^n,
\end{align*}
where
$$
\omega(n,u,d)\coloneqq \frac{u^{\frac{\widetilde{k}_0(n,u,d)}{n}}
d^{1-\frac{\widetilde{k}_0(n,u,d)}{n}}}
{2\left(\frac{\widetilde{k}_0(n,u,d)}{n}\right)^{\frac{\widetilde{k}_0(n,u,d)}{n}}
\left(1-\frac{\widetilde{k}_0(n,u,d)}{n}\right)^{1-\frac{\widetilde{k}_0(n,u,d)}{n}}}.
$$
Since we assumed that $ud \neq 1$, we get
$$
\frac{1}{n}\widetilde{k}_0(n,u,d)\to \frac{\ln\left(\frac{1}{d}\right)}{\ln\left(\frac{u}{d}\right)}
\in\left(0,1\right)\setminus \left\{\tfrac{1}{2}\right\},
$$
and therefore
$$
2\left(\frac{\widetilde{k}_0(n,u,d)}{n}\right)^{\frac{\widetilde{k}_0(n,u,d)}{n}}
\left(1-\frac{\widetilde{k}_0(n,u,d)}{n}\right)^{1-\frac{\widetilde{k}_0(n,u,d)}{n}}\to\rho(u,d)\in (1,2) \quad \text{as }n\to\infty.
$$
Furthermore,
\begin{equation}\label{eqhilf2}
u^{\frac{\widetilde{k}_0(n,u,d)}{n}}
d^{1-\frac{\widetilde{k}_0(n,u,d)}{n}}\to u^{\frac{\ln\left(\frac{1}{d}\right)}
{\ln\left(\frac{u}{d}\right)}}d^{\frac{\ln\left(u\right)}
{\ln\left(\frac{u}{d}\right)}}=1 \quad \text{for }n\to\infty.
\end{equation}
Finally, this implies that $\omega(n,u,d)\to  {\rho(u,d)}^{-1}\in (0,1)$, which yields $\omega(n,u,d)^n\to 0$ for $n\to\infty$, as requested.

\medskip

\textbf{Case 4:}  $u+d> 2$ and $ud=1$. If  $n$ is even,  there exists some $m \in \mathbb{N}$ such that $n=2m$. Hence,  $\widetilde{k}_0(n,u,d)=\left\lfloor \frac{n}{2}\right\rfloor=m$ and therefore $A(n,u,d)$ can be written as
$$
A(n,u,d)=\sum_{k=0}^{m}\left(\frac{1}{2}\right)^{2m}\binom{2m}{k}d^{2m-2k}.
$$
Since $k\le m$, it follows that $\binom{2m}{k}\le \binom{2m}{m}$. Now we can use Stirling's approximation to see that
\begin{equation}\label{stirling}
\binom{2m}{m}\sim\frac{2^{2m}}{\sqrt{\pi m}},
\end{equation}
where $\sim$ means that the two expressions are asymptotically equivalent. Hence, for $k\le m$ and sufficiently large $m\in\N$,
$$
\left(\frac{1}{2}\right)^{2m}\binom{2m}{k}\le \frac{1}{\sqrt{m}}.
$$
We conclude that
$$
A(n,u,d)\le \frac{1}{\sqrt{m}}\sum_{j=0}^{m} (d^2)^j\le (1-d^2)^{-1}\frac{1}{\sqrt{m}}\to 0 \quad \text{as } n\to \infty.
$$

If $n$ is odd,  there exists some $m\in\mathbb{N}$ such that $n=2m+1$. Then it follows that $\widetilde{k}_0(n,u,d)=m$ and $A(n,u,d)$ can be bounded by
\begin{align*}
    A(n,u,d)&=\sum_{k=0}^{m}\left(\frac{1}{2}\right)^{2m+1}\binom{2m+1}{k}d^{2m+1-2k}\\
    &=\frac{d}{2}\sum_{k=0}^{m}\left(\frac{1}{2}\right)^{2m}\frac{2m+1}{2m+1-k}\binom{2m}{k}d^{2m-2k}\\
    &\le \frac{d}{2}\frac{2m+1}{m+1}A(2m,u,d)\le A(2m,u,d)\to 0\quad \text{as } n\to \infty,
\end{align*}
which completes the argument.
 \end{proof}

  \noindent
 \textbf{Remarks.} $\bullet$ As soon as \eqref{b2} is available, the second case in the previous proof can be included into the third case. However, the application of Hoeffding's inequality is easier in the second case. Moreover, we could alternatively argue with the law of large numbers in the second case (though without obtaining the exponential rate of convergence then). Therefore we decided to treat these cases separately.

 \medskip

$\bullet$ At the critical boundary, characterised by the equation $ud = 1$, the expression $A(n,u,d)$ still converges to zero. However, the rate of convergence is no longer exponential, but of order $1/\sqrt{n}$. We only presented an upper bound on the order of convergence, but one could consider the summand for $k=m$ to deduce a lower bound as well.

\medskip

In order to establish the asymptotic behaviour of $G(n,u,d)$, which is stated in Theorem \ref{Thm1}, it remains to analyse the asymptotic behaviour of $B(n,u,d)$. The required result is provided by the following lemma.

\begin{lemma}\label{L2}
If $0<d\le 1\le u$ and $u\neq  d$, then
$$
\lim_{n\rightarrow \infty} B(n,u,d)=
\begin{cases}
0,& \text{if } \,  ud<1,\\
1,& \text{if } \,  ud=1,\\
2,& \text{if } \,  ud>1.
\end{cases}
$$
\end{lemma}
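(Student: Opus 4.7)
The plan is to recognise $B(n,u,d) = 2\,\BP(Y_n \ge \widetilde{k}_0(n,u,d)+1)$ with $Y_n \sim \Bin(n,\tfrac{1}{2})$, and then to compare the normalised threshold $\widetilde{k}_0(n,u,d)/n$, which converges to $p^* := \ln(1/d)/\ln(u/d)$, with the mean $\tfrac{1}{2}$ of $Y_n/n$. A short calculation based on the identity $p^* = \ln(1/d)/(\ln u + \ln(1/d))$ shows that $ud < 1$ is equivalent to $p^* > \tfrac{1}{2}$, $ud = 1$ to $p^* = \tfrac{1}{2}$, and $ud > 1$ to $p^* < \tfrac{1}{2}$. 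Hence the three regimes of the lemma correspond exactly to the threshold sitting above, at, or below the mean of the binomial, and the problem reduces to standard binomial tail asymptotics.

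In the two non-critical regimes I would simply invoke the law of large numbers (or Hoeffding's inequality if an exponential rate is desired). If $ud < 1$, then $p^* > \tfrac{1}{2}$, so for all sufficiently large $n$ one has $\widetilde{k}_0(n,u,d)/n \ge \tfrac{1}{2} + \varepsilon$ for some $\varepsilon > 0$; hence $\BP(Y_n \ge \widetilde{k}_0(n,u,d)+1) \le \BP(Y_n/n \ge \tfrac{1}{2} + \varepsilon) \to 0$, giving $B(n,u,d) \to 0$. If instead $ud > 1$, then $p^* < \tfrac{1}{2}$, and the analogous bound applied to the complementary event yields $\BP(Y_n \ge \widetilde{k}_0(n,u,d)+1) \to 1$, so $B(n,u,d) \to 2$.

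The main obstacle is the critical case $ud = 1$, where $p^* = \tfrac{1}{2}$ exactly and the law of large numbers no longer determines the limit. Here one checks directly that $\widetilde{k}_0(n,u,d) = \lfloor n/2 \rfloor$, and I would exploit the symmetry of $\Bin(n,\tfrac{1}{2})$. For odd $n = 2m+1$ this symmetry gives $\BP(Y_n \ge m+1) = \BP(Y_n \le m)$, and since these probabilities sum to $1$ we obtain $\BP(Y_n \ge m+1) = \tfrac{1}{2}$ and therefore $B(n,u,d) = 1$ exactly. For even $n = 2m$, symmetry yields $2\,\BP(Y_n \ge m+1) + \BP(Y_n = m) = 1$, and Stirling's formula (as already used in \eqref{stirling}) implies $\BP(Y_n = m) = \binom{2m}{m}2^{-2m} \sim (\pi m)^{-1/2} \to 0$, so $B(n,u,d) \to 1$ along even $n$ as well. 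Combining the three cases delivers the claimed trichotomy.
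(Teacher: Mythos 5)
Your proposal is correct and follows essentially the same route as the paper: law of large numbers (or a Hoeffding/Okamoto-type bound) in the two non-critical regimes after translating $ud\lessgtr 1$ into the position of the limit threshold $\ln(1/d)/\ln(u/d)$ relative to $\tfrac12$, and in the critical case $ud=1$ the exact symmetry of $\Bin(n,\tfrac12)$ for odd $n$ together with Stirling's estimate for the central term $\binom{2m}{m}2^{-2m}$ for even $n$. The only cosmetic difference is that the paper writes the symmetry step via the explicit binomial-sum identities rather than in probabilistic language.
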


\begin{proof}
\textbf{Case 1:}  $ud<1$. Then
\begin{equation}\label{b1}
\frac{1}{n}\widetilde{k}_0(n,u,d)\to \frac{\ln\left(\frac{1}{d}\right)}{\ln\left(\frac{u}{d}\right)}>\frac{1}{2}\quad \text{as }n\to\infty,
\end{equation}
where the lower bound on the limit is equivalent to $ud<1$. If we introduce binomially distributed random variables $X_n\sim\Bin(n,\frac{1}{2})$, it follows that
$$
B(n,u,d)=2\,\BP\left(X_n\ge \widetilde{k}_0(n,u,d)+1\right)\le 2\,\BP\left(\frac{1}{n}X_n\ge \frac{1}{n}\widetilde{k}_0(n,u,d)\right)\to 0\quad\text{for }n\to\infty,
$$
by the law of large numbers. An application of Okamoto's inequality (see \cite{Oka}), in combination with the fact that $\widetilde{k}_0(n,u,d)-\frac{n}{2}>0$  for sufficiently large $n\in\N$ and \eqref{b1}, yields the even stronger statement
$$
B(n,u,d)\le 2\,\BP\left(X_n-\frac{n}{2}\ge \widetilde{k}_0(n,u,d)-\frac{n}{2}\right)
\le 2\exp\left(-2\left(\frac{\widetilde{k}_0(n,u,d)}{n}-\frac{1}{2}\right)^2n\right)\to 0
$$
as $n\to\infty$.

\medskip

\textbf{Case 2:}  $ud>1$. Then
$$
\frac{1}{n}\widetilde{k}_0(n,u,d)\to \frac{\ln\left(\frac{1}{d}\right)}{\ln\left(\frac{u}{d}\right)}
<\frac{1}{2} \text{ as } n\to \infty.
$$
Analogously to the first case, an application of the law of large numbers yields
$$
B(n,u,d)=2\, \BP(X_n\ge \widetilde{k}_0(n,u,d)+1)=
2\, \BP\left(\frac{1}{n}X_n\ge \frac{1}{n}(\widetilde{k}_0(n,u,d)+1)\right)\to 2\cdot 1=2
$$
for $n\to\infty$.

\medskip

Again, an application of Okamoto's inequality \cite{Oka} instead of the law of  large numbers, leads to a stronger, exponential estimate given by
\begin{align*}
B(n,u,d) &= 2 \, \mathbb{P}\left(X_n \ge \widetilde{k}_0(n,u,d)+1 \right) = 2 \left(1- \mathbb{P}\left(X_n \le \widetilde{k}_0(n,u,d) \right) \right)\\
 &\geq 2 \left(1- \exp \left(-2n\left(\frac{\widetilde{k}_0(n,u,d)}{n}-\frac{1}{2} \right)^2 \right) \right) \rightarrow 2\quad \text{as }n\to\infty.
\end{align*}
In combination with the upper bound $B(n,u,d)\leq 2$, it now follows that $\lim_{n\rightarrow \infty} B(n,u,d)=2$,  and the convergence is of exponential order.

\medskip

\textbf{Case 3:} $ud=1$. Then we have  $\widetilde{k}_0(n,u,d)=\left\lfloor \frac{n}{2}\right\rfloor$. \medskip

If $n$ is odd, there exists some $m\in \mathbb{N}$ such that $n=2m+1$. From the identity
$$
\sum_{k=m+1}^{2m+1}\binom{2m+1}{k}=2^{2m}
$$
we deduce that
$$
B(n,u,d) =2\,\BP\left(X_n\ge \left\lfloor \frac{n}{2}\right\rfloor+1\right)=2\cdot \frac{1}{2}=1.
$$
If $n$ is even, hence $n=2m$ for some $m\in \mathbb{N}$, we get
$$
\sum_{k=m+1}^{2m}\binom{2m}{k}=2^{2m-1}-\frac{1}{2}\binom{2m}{m} ,
$$
and therefore, by Stirling's approximation \eqref{stirling}, it follows that
$$
B(n,u,d) =2\,\BP\left(X_n\ge \left\lfloor \frac{n}{2}\right\rfloor+1\right)\to 2\cdot \frac{1}{2}=1 \quad \text{as }n\to\infty.
$$
In this case, the convergence is of the order $1/\sqrt{n}$.
\end{proof}

\subsection{Unbounded prize}

As we already mentioned in Section \ref{sec3}, the a priori  bound on the  prize in the event of a win contributes to the disadvantages of the participants in Elsberg's game of chance. We will now analyse a modified gambling rule, which  depends on the final score and does not involve an a priori bound. \medskip

We will now assume that the payout at the end of the game is always given by $a$ times the final score. In analogy to the representation of the expected net profit in \eqref{erw gewinn ohne deckelung basic}, we can derive a general representation of the expected net profit using the updated payout rule in the case of a win. We therefore arrive at the following (much simpler) expression for  the expected net profit, that is,
\begin{equation}
a\cdot \left(\frac{1}{2} \right)^n \sum_{k=0}^n \binom{n}{k}  \left(u^k \cdot d^{n-k} - 1 \right) = a \cdot \left(\left(\frac{u+d}{2} \right)^n - 1 \right).\label{erw gewinn ohne deckelung neu}
\end{equation}

Clearly, the expected net profit is zero if and only if $u+d = 2$. Thus, using the current payout rule in the event of a win, we can exactly characterise the tuples $(d,u)$ which lead to a fair game, by the condition $u+d = 2$.

\medskip

In the subsection below, we will analyse a modified version of Elsberg's game of chance based on a coin which is not necessarily fair. Before pursuing this topic,  we  describe the influence of a biased coin in the current scenario of a payout which does not involve an a priori bound on the prize in the event of a win. If $p\in [0,1]$ describes the probability of the event \enquote{heads} and $q=1-p$ the probability that the coin shows \enquote{tails}, the expected net profit in the underlying situation is then given by $a((pu+qd)^n-1)$. Hence, under these assumptions the game is fair if and only if $pu + qd = 1$.

\subsection{Fake coins}\label{sec:Fakecoins}

In this subsection, we analyse the influence of the probability $p$ that the coin shows \enquote{heads}. In \enquote{GREED}, the gambling is executed using a fair coin ($p=\frac{1}{2}$). Instead of a fair coin, we could use a bent (biased) coin or some completely different Bernoulli experiment (for example, by tossing a drawing pin or a dice and modifying the rule in step 3' appropriately). For the sake of simplicity, we will continue to use the toss of a coin. \medskip

Intuitively, increasing the probability $p$ that the coin shows \enquote{heads} should increase the  chances of winning for the participants of the gambling and hence increase their expected net profit. Figure \ref{fig erw gewinn p var} supports this conjecture. The figure illustrates the expected net profit after $n=100$ rounds using an initial stake of $a=100$ in terms of the probability $p$. The factors $u$ and $d$ are chosen according to the gamble described in \enquote{GREED}.\medskip

As in the analysis of the influence of the up and down factors $u$ and $d$, we want to choose the probability $p$ so that a fair game of chance is obtained. Figure \ref{fig erw gewinn p var} suggests that it is possible (at least numerically) to determine such a probability $p$. The net profit for $p=\frac{1}{2}$ and the choice of $p$ which results in a fair game   are both marked in Figure \ref{fig erw gewinn p var}.

\begin{figure}[ht!]
\centering
\includegraphics[scale=1]{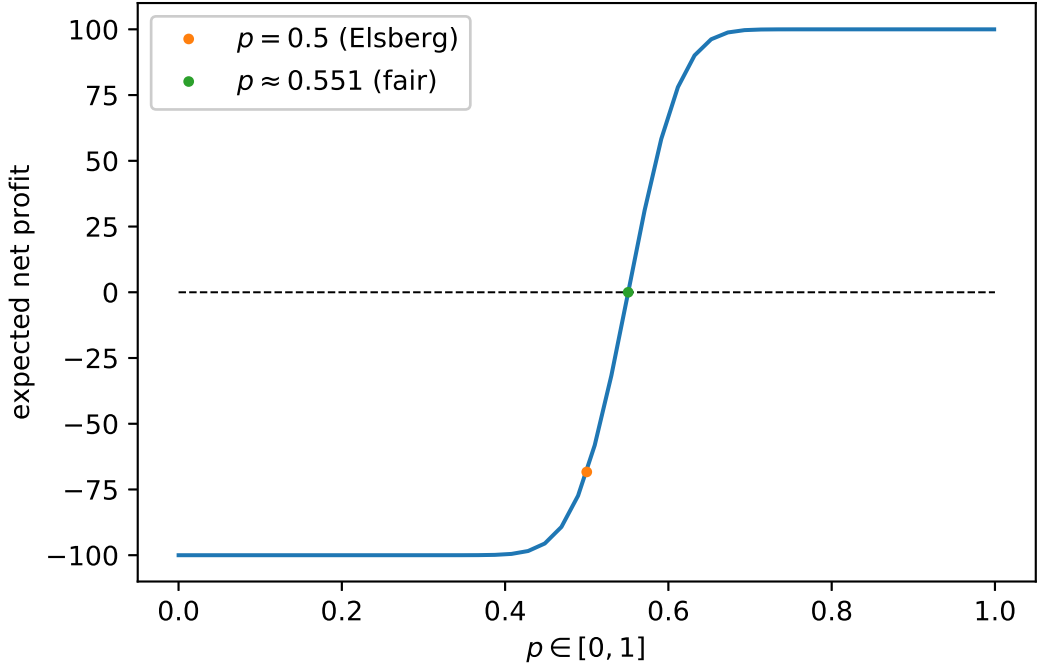}
\caption{Illustration of the expected net profit after $n=100$ coin tosses in terms of the probability $p$ that the coin shows \enquote{heads}, using the game parameters $a=100$, $u=1.5$ and $d=0.6$.}
\label{fig erw gewinn p var}
\end{figure}

In order to formalise the previously described modified version of Elsberg's game of chance, we consider a sequence of independent Bernoulli distributed random variables $X'_i \sim \Bin(1,p)$, where $\{X'_i = 1\}$ and $\{X'_i = 0\}$ represent the events that the $i$-th coin toss shows \enquote{heads} and \enquote{tails}, respectively. In order to simplify our calculations, we define the complementary probability $q=1-p$, which denotes the probability of the event \enquote{tails}. Then, the random variable $X_n^{(p)} \coloneqq X'_1 + \cdots + X'_n \sim \Bin(n,p)$ counts how often the event \enquote{heads} occurs among the $n$ coin tosses. \medskip

Since we are interested in the net profit at the end of the game, we introduce the random variable $T(n,u,d,p)$ given by
\begin{align*}
T(n,u,d,p):&=\sum_{k=0}^{\widetilde{k}_0(n,u,d)}\mathbf{1}\{X_n^{(p)}=k\}(u^kd^{n-k}-1)+
\mathbf{1}\{X_n^{(p)}\ge \widetilde{k}_0(n,u,d)+1\}\\
&=-1+ \sum_{k=0}^{\widetilde{k}_0(n,u,d)}\mathbf{1}\{X_n^{(p)}=k\} u^kd^{n-k} +2\cdot
\mathbf{1}\{X_n^{(p)}\ge \widetilde{k}_0(n,u,d)+1\},
\end{align*}
which represents the net profit after $n$ rounds using the initial stake $a=1$. At this point, we recall the explanation at the beginning of Subsection \ref{subsec betting on the edge} according to which the expected net profit is proportional to the stake $a$. Hence, it suffices to consider the case $a=1$.\medskip

By $\mathbf{1}\{X_n^{(p)}=k\}$ we denote the indicator function with respect to the event $\{X_n^{(p)}=k\}$. More precisely, the expression is given by
\begin{equation*}
    \mathbf{1}\{X_n^{(p)}=k\} = \begin{cases}
    1, & \text{if } X_n^{(p)}=k,\\
    0, & \text{if } X_n^{(p)}\neq k.
    \end{cases}
\end{equation*}

For the sake of notational simplicity, we will use the shorthand notation $T_n = T(n,u,d,p)$.\medskip

Since we defined $T_n$ as the net profit at the end of the game, the expected net profit is given by the expectation of the random variable $T_n$. Hence, we get the following representation of the expected net profit $G(n,u,d,p)\coloneqq \mathbb{E}[T(n,u,d,p)]$, that is,
\begin{align*}
G(n,u,d,p)&=-1+  \sum_{k=0}^{\widetilde{k}_0(n,u,d)}\binom{n}{k}(pu)^k(qd)^{n-k}+
2\cdot \sum_{k=\widetilde{k}_0(n,u,d)+1}^{n}\binom{n}{k}p^kq^{n-k}\\
&\eqqcolon -1+ A(n,u,d,p)+  B(n,u,d,p).
\end{align*}
Here it should be noted that $\widetilde{k}_0(n,u,d)$ is independent of $p$.

\medskip

Similarly to the results in Section \ref{subsec betting on the edge}, we determine the asymptotic behaviour of the expected net profit as $n \rightarrow \infty$. Again, for the proof we consider two auxiliary results concerning the asymptotic behaviour of the quantities $A(n,u,d,p)$ and  $B(n,u,d,p)$ as $n\to\infty$.

\medskip

We start by providing two analytic inequalities which will be useful in the proof of Lemma \ref{L1p}.

\begin{lemma}\label{ungl1}
If $d\in (0,1]$ and $x\in [0,d]$, then
$$
(1-x)^{1-x}(d-x)^x\ge 1-\frac{x}{d}.
$$
The inequality is strict if $d\in (0,1)$ and $x\in (0,d)$.

\end{lemma}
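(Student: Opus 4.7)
I would first dispose of the boundary cases. At $x=0$ both sides equal $1$, and at $x=d$ both sides equal $0$ (using $0^d=0$, since $d>0$), so the substantive range is $x\in(0,d)$, where both sides are strictly positive and one may pass to logarithms.

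Doing so and using $\ln(1-x/d)=\ln(d-x)-\ln d$, the inequality is equivalent to
$$
\psi(x)\coloneqq (1-x)\ln\frac{1-x}{d-x}+\ln d\ \ge\ 0.
$$
A direct check yields $\psi(0)=0$, so it suffices to show that $\psi$ is nondecreasing on $[0,d)$. Differentiating the single remaining logarithm and simplifying, I would arrive at
$$
\psi'(x)=t-1-\ln t,\qquad \text{where } t\coloneqq \frac{1-x}{d-x}.
$$

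Since $d\le 1$, we have $1-x\ge d-x>0$, hence $t\ge 1$ on $[0,d)$, with $t\equiv 1$ precisely when $d=1$. The elementary inequality $t-1-\ln t\ge 0$, strict for $t\neq 1$, then gives $\psi'\ge 0$, and in fact $\psi'>0$ on $[0,d)$ as soon as $d<1$. Combined with $\psi(0)=0$, this delivers $\psi\ge 0$ throughout $[0,d)$, and strict positivity on $(0,d)$ whenever $d\in(0,1)$, which is exactly the claim. In the degenerate case $d=1$, both sides of the original inequality collapse to $1-x$, consistent with $\psi\equiv 0$. I do not anticipate any real obstacle here; the only point requiring a little care is the rearrangement of logarithms that exposes the clean $t-1-\ln t$ form after differentiation.
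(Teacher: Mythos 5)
Your argument is correct and matches the paper's proof essentially verbatim: your $\psi$ is the paper's auxiliary function $f$ (after combining the logarithms via $\ln(1-x/d)=\ln(d-x)-\ln d$), and both proofs reduce to $\psi'(x)=t-1-\ln t\ge 0$ with $t=\frac{1-x}{d-x}\ge 1$, strict when $d<1$. Your explicit handling of the boundary cases $x=0$, $x=d$ is a harmless addition; nothing further is needed.
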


\begin{proof}
If $d=1$, the assertion of the lemma is apparently true, since the expressions on the left- and right-hand side are equal. Now let $d\in (0,1)$ be arbitrary, but fixed. We introduce the auxiliary function
 $$
 f(x)\coloneqq(1-x)\ln(1-x)+x\ln(d-x)-\ln
 \left(1-\frac{x}{d}\right),\quad x\in [0,d).
 $$
 Then, $f(0)=0$ and
 $$
 f'(x)=\frac{1-x}{d-x}-1-\ln\left(\frac{1-x}{d-x}\right)>0\quad \text{for } x\in [0,d),
 $$
 since  $h-1-\ln(h)>0$ for  $h>1$. Using the strict monotonicity of $f$, it follows that $f(x)> 0$ for  $x\in (0,d)$. Now the assertions of the lemma can be easily deduced.
\end{proof}

\begin{lemma}\label{ungl2}
If $x,p\in(0,1)$, then
$$
\left(\frac{x}{p}\right)^x\left(\frac{1-x}{1-p}\right)^{1-x}\ge 1.
$$
Equality holds if and only if $x= p$.
\end{lemma}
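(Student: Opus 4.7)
The plan is to reduce the multiplicative inequality to an additive one by taking logarithms and then to use a short calculus argument (essentially the non-negativity of the Kullback--Leibler divergence between two Bernoulli distributions).

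First I would set
\begin{equation*}
g(x) \coloneqq x\ln\frac{x}{p} + (1-x)\ln\frac{1-x}{1-p},\qquad x\in(0,1),
\end{equation*}
and note that the asserted inequality is equivalent to $g(x)\ge 0$, with equality precisely when $x=p$. Then I would compute
\begin{equation*}
g'(x) = \ln\frac{x}{p} - \ln\frac{1-x}{1-p} = \ln\frac{x(1-p)}{p(1-x)},
\end{equation*}
so that $g'(x)=0$ holds if and only if $x(1-p)=p(1-x)$, i.e.\ $x=p$. A second differentiation gives $g''(x)=\frac{1}{x}+\frac{1}{1-x}>0$ on $(0,1)$, so $g$ is strictly convex. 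Consequently the critical point $x=p$ is the unique global minimum of $g$, and a direct substitution yields $g(p)=0$. Hence $g(x)\ge 0$ for all $x\in(0,1)$, with equality if and only if $x=p$, which is exactly what we wanted. Exponentiating returns the stated form of the inequality.

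The argument is essentially mechanical once one notices the equivalence after taking logarithms, so no real obstacle is expected; the only point to keep an eye on is to record the strict inequality (coming from strict convexity) in order to obtain the equality characterization. If one prefers to avoid the second derivative, one can instead invoke Jensen's inequality for the (strictly) concave function $\ln$: writing $-g(x)=x\ln(p/x)+(1-x)\ln((1-p)/(1-x))\le \ln(p+(1-p))=0$, with the strict-concavity case of equality giving $p/x=(1-p)/(1-x)$, i.e.\ $x=p$.
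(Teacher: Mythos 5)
Your proof is correct and follows essentially the same route as the paper: after taking logarithms you work with the same auxiliary function $g$, compute the same derivative $g'$, identify $x=p$ as the unique critical point and use $g(p)=0$; the only difference is that you certify the minimum via $g''>0$ (strict convexity), whereas the paper argues from the boundary behaviour of $g$ and $g'$. Both are fine, and your explicit convexity step (or the Jensen aside) makes the equality characterisation fully transparent.
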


\begin{proof}
Let $p\in (0,1)$ be arbitrary, but fixed. Again, we introduce an auxiliary function,  given by
$$
g(x)\coloneqq x\ln\left(\frac{x}{p}\right)+(1-x)\ln\left(\frac{1-x}{1-p}\right),\quad x\in (0,1).
$$
Then $g$ satisfies $g(0+)= -\ln(1-p)>0$, $g(1-)=-\ln(p)>0$ and
$$
g'(x)=\ln\left(\frac{x}{p}\frac{1-p}{1-x}\right).
$$
Moreover, $g'(0+)=-\infty$, $g'(1-)=+\infty$ and $g'(x)=0$ are satisfied if and only if $x=p$. Finally, we have $g(p)=0$. Now we can easily deduce the assertions of the lemma.
\end{proof}

The preceding auxiliary results can be used to prove the following generalisation of Lemma \ref{L1}.

\begin{lemma}\label{L1p}
If $0<d\le 1\le u$ and $u\neq d$, then
$$
\lim_{n\rightarrow \infty} A(n,u,d,p)=0.
$$
\end{lemma}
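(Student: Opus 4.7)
The plan is to mirror the case analysis in the proof of Lemma~\ref{L1}, with the weighted mean $pu+qd$ now playing the role of $(u+d)/2$. The starting point is the identity
\begin{equation*}
A(n,u,d,p) = (pu+qd)^n\,\BP\bigl(\xi_n \le \widetilde{k}_0(n,u,d)\bigr),\qquad \xi_n\sim \Bin\Bigl(n,\tfrac{pu}{pu+qd}\Bigr),
\end{equation*}
combined with $n^{-1}\widetilde{k}_0(n,u,d)\to \alpha := \frac{\ln(1/d)}{\ln(u/d)}$ and the characteristic identity $u^\alpha d^{1-\alpha} = 1$. I would split according to the sign of $pu+qd-1$, just as Lemma~\ref{L1} split by the sign of $u+d-2$.

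If $pu+qd<1$, then $A(n,u,d,p)\le (pu+qd)^n\to 0$, paralleling Case~1 of Lemma~\ref{L1}. On the boundary $pu+qd=1$ (which forces $d<1$, since otherwise $u=d$), I would apply Lemma~\ref{ungl1} with $x=qd\in(0,d)$; after taking logarithms and using $qd=1-pu$ this rewrites as $pu>\alpha$. A standard Hoeffding bound on $\BP(\xi_n\le \widetilde{k}_0)$ with $\xi_n\sim\Bin(n,pu)$ then yields $A\to 0$, paralleling Case~2 of Lemma~\ref{L1}.

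For $pu+qd>1$ the plan is to apply the Chernoff--Hoeffding bound \eqref{eq chernoff-hoeffding} to $\zeta_n:=n-\xi_n\sim \Bin\bigl(n,\tfrac{qd}{pu+qd}\bigr)$ at level $y=1-n^{-1}\widetilde{k}_0(n,u,d)$, reproducing the algebra of Case~3 of Lemma~\ref{L1}. Using $u^\alpha d^{1-\alpha}=1$, the per-$n$ factor converges to
\begin{equation*}
\omega_\infty \;=\; \frac{p^\alpha q^{1-\alpha}}{\alpha^\alpha (1-\alpha)^{1-\alpha}},
\end{equation*}
and Lemma~\ref{ungl2} applied with $x=\alpha$ gives $\omega_\infty\le 1$ with equality precisely when $\alpha=p$, i.e.\ $u^pd^q=1$. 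Outside this single critical direction, $A$ decays exponentially. The side-condition $\alpha<pu/(pu+qd)$ needed to invoke \eqref{eq chernoff-hoeffding} is a short calculus check: $p\mapsto pu/(pu+qd)$ is strictly increasing in $p$, and already at the boundary value $p_0=(1-d)/(u-d)$ of the previous case, a convexity argument in the spirit of Lemma~\ref{ungl1} shows $p_0u\ge \alpha$.

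The main obstacle is the critical subcase $u^pd^q=1$, where $\omega_\infty=1$ and the exponential bound degenerates, mirroring Case~4 of Lemma~\ref{L1}. My plan is to imitate that case: the ratio $\binom{n}{k-1}(pu)^{k-1}(qd)^{n-k+1}/\bigl(\binom{n}{k}(pu)^k(qd)^{n-k}\bigr)=\tfrac{k}{n-k+1}\cdot\tfrac{qd}{pu}$ tends to $d/u<1$ at $k=\widetilde{k}_0\sim np$, so the summands are increasing on $\{0,\ldots,\widetilde{k}_0\}$ and dominated by a geometric series, giving $A\le C\cdot \binom{n}{\widetilde{k}_0}(pu)^{\widetilde{k}_0}(qd)^{n-\widetilde{k}_0}$. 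Stirling's formula~\eqref{stirling} applied to $\binom{n}{\widetilde{k}_0}$, together with $u^{\widetilde{k}_0}d^{n-\widetilde{k}_0}\le 1$, reduces this bound to $O(1/\sqrt n)\to 0$; the convergence is then only at polynomial rate $1/\sqrt n$, as in the fair-coin setting.
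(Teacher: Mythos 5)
Your proposal is correct and follows essentially the same route as the paper's proof: the same case split on $pu+qd$ versus $1$ and on $u^pd^{q}$ versus $1$, Lemma~\ref{ungl1} for the boundary and supercritical cases, the Chernoff--Hoeffding bound \eqref{eq chernoff-hoeffding} combined with Lemma~\ref{ungl2} to identify $u^pd^{q}=1$ as the only direction where the exponential bound degenerates, and in the critical case a reduction to the near-modal binomial weight times a convergent geometric factor. The only point to tidy up is the final step: \eqref{stirling} as stated covers only the symmetric coefficient $\binom{2m}{m}$, so for general $p$ you need the full Stirling formula or the local De Moivre--Laplace estimate $\binom{n}{\lfloor np\rfloor}p^{\lfloor np\rfloor}q^{n-\lfloor np\rfloor}\sim(2\pi pq n)^{-1/2}$ (the paper instead sandwiches this probability via the central limit theorem and notes exactly these alternatives in the remark after the proof).
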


\begin{proof}
The overall structure of the proof is similar to the proof of Lemma \ref{L1}.

\medskip

 \textbf{Case 1:} $pu+qd<1$. Then it follows that
$$
A(n,u,d,p)=\left(pu+qd\right)^n\cdot\sum_{k=0}^{\widetilde{k}_0(n,u,d)}\binom{n}{k}
\left(\frac{pu}{pu+qd}\right)^k\left(\frac{qd}{pu+qd}\right)^{n-k}\le \left(pu+qd\right)^n\to 0
$$
as $n\to\infty$.

\medskip

\textbf{Case 2:}  $pu+qd= 1$. We have  $d<1$, since  $u\neq d$. An application of Lemma \ref{ungl1} with  $x=qd\in (0,d)$ shows that
$$
(1-qd)^{1-qd}(pd)^{qd}>p
$$
and therefore
$$
u^{pu}d^{qd}>1.
$$
The previous inequality can be  rewritten as
$$
\frac{\ln\left(\frac{1}{d}\right)}{\ln\left(\frac{u}{d}\right)}<pu.
$$
Hence, the asymptotic behaviour of $\widetilde{k}_0$ is given by
\begin{equation}\label{b2p}
\frac{1}{n}\widetilde{k}_0(n,u,d)\to \frac{\ln\left(\frac{1}{d}\right)}{\ln\left(\frac{u}{d}\right)}<pu\quad \text{for }n\to\infty.
\end{equation}
Now let $Y_n^{(p)}\sim\Bin(n,pu)$. The law of large numbers implies that
\begin{align*}
A(n,u,d,p)&=\sum_{k=0}^{\widetilde{k}_0(n,u,d)}\binom{n}{k}
\left(pu\right)^k\left(qd\right)^{n-k}
=\BP\left(Y_n^{(p)}\le \widetilde{k}_0(n,u,d)\right)\\
&=\BP\left(\frac{1}{n}Y_n^{(p)}\le \frac{\widetilde{k}_0(n,u,d)}{n}\right)\to 0 \quad \text{as } n\to\infty,
\end{align*}
where we used \eqref{b2p} and $\frac{1}{n}\mathbb{E} Y_n^{(p)}= pu$.\medskip

Analogously to the proof of Lemma \ref{L1}, an alternative argument based on Hoeffding's or Chernoff's inequality yields an exponential rate of convergence.

\medskip

\textbf{Case 3:}  $pu+qd> 1$ and $u^pd^q\neq 1$. An application of Lemma \ref{ungl1} with $x=qd\in (0,d)$ (and hence $1-qd>0$) yields the  inequality
$$
(pu)^{1-qd}(pd)^{qd}>(1-qd)^{1-qd}(pd)^{qd}\ge p,
$$
and therefore
$$
u^{pu}d^{qd}>1.
$$
The previous inequality is equivalent to the left  inequality in
$$
\frac{\ln\left(\frac{1}{d}\right)}{\ln\left(\frac{u}{d}\right)}<\frac{pu}{pu+qd}<1.
$$
Finally, for sufficiently large $n$ we get
\begin{equation}\label{b3p}
1-\frac{\widetilde{k}_0(n,u,d)}{n}>\frac{qd}{pu+qd}\quad\text{and}\quad
\frac{\widetilde{k}_0(n,u,d)}{n}<\frac{pu}{pu+qd}.
\end{equation}
Now let $\xi_n^{(p)}\sim \Bin (n,\frac{pu}{pu+qd})$ and $\zeta_n^{(p)}\coloneqq n-\xi_n^{(p)}\sim\Bin(n,\frac{qd}{pu+qd})$.  Similarly to the proof of Theorem \ref{Thm1}, we can use
 \eqref{b3p} in the derivation of an upper bound on $A(n,u,d,p)$, that is,
\begin{align*}
A(n,u,d,p)&=\left( {pu+qd}\right)^n\BP\left(\xi_n^{(p)}\le
\widetilde{k}_0(n,u,d)\right)
=\left( {pu+qd}\right)^n\BP\left(\zeta_n^{(p)}\ge n-
\widetilde{k}_0(n,u,d)\right)\\
&=\left( {pu+qd}\right)^n\BP\left(\zeta_n^{(p)}\ge \left(1-
\frac{\widetilde{k}_0(n,u,d)}{n}\right)n\right)\\
&\le \left( {pu+qd}\right)^n
\left[\frac{\left(\frac{pu}{pu+qd}\right)^{\frac{\widetilde{k}_0(n,u,d)}{n}}
\left(\frac{qd}{pu+qd}\right)^{1-\frac{\widetilde{k}_0(n,u,d)}{n}}}
 {\left(\frac{\widetilde{k}_0(n,u,d)}{n}\right)^{\frac{\widetilde{k}_0(n,u,d)}{n}}
\left(1-\frac{\widetilde{k}_0(n,u,d)}{n}\right)^{1-\frac{\widetilde{k}_0(n,u,d)}{n}}}
\right]^n\\
&= \omega(n,u,d,p)^n,
\end{align*}
where
$$
\omega(n,u,d,p)\coloneqq  \frac{u^{\frac{\widetilde{k}_0(n,u,d)}{n}}
d^{1-\frac{\widetilde{k}_0(n,u,d)}{n}}}
{\left(\frac{1}{p}\right)^{\frac{\widetilde{k}_0(n,u,d)}{n}}\left(\frac{1}{q}\right)^{1-\frac{\widetilde{k}_0(n,u,d)}{n}}\left(\frac{\widetilde{k}_0(n,u,d)}{n}\right)^{\frac{\widetilde{k}_0(n,u,d)}{n}}
\left(1-\frac{\widetilde{k}_0(n,u,d)}{n}\right)^{1-\frac{\widetilde{k}_0(n,u,d)}{n}}}
.
$$

\medskip

Observe that
$$
\frac{1}{n}\widetilde{k}_0(n,u,d)\to \frac{\ln\left(\frac{1}{d}\right)}{\ln\left(\frac{u}{d}\right)}\eqqcolon c\in (0,1).
$$
The limit $c$ satisfies $c\neq p$ due to the assumption $u^pd^{1-p}\neq 1$. From Lemma \ref{ungl2} we deduce  that the denominator of $\omega(n,u,d,p)$ converges to $\rho(u,d,p)>1$ as $n\to\infty$.\medskip

Moreover,  \eqref{eqhilf2} remains true (see the proof of Lemma \ref{L1}), since $\widetilde{k}_0$ does not depend on $p$. \medskip

We conclude that $\omega(n,u,d,p)\to  {\rho(u,d,p)}^{-1}\in (0,1)$ and therefore
$\omega(n,u,d,p)^n\to 0$ as $n\to\infty$.

\medskip

\textbf{Case 4:} $u^pd^q= 1$. Since $d\neq u$, it follows that $pu+qd>1$. Since $u^pd^q= 1$,  $\widetilde{k}_0(n,u,d)=\lfloor np\rfloor$ and therefore $A(n,u,d,p)$ simplifies to
$$
A(n,u,d,p)=\sum_{k=0}^{\lfloor np\rfloor}
\binom{n}{k}(pu)^k(qd)^{n-k}.
$$
For $k\le \lfloor np-q\rfloor$ we can easily deduce that $a_k\coloneqq
\binom{n}{k}p^kq^{n-k}\le a_{k+1}$.  In addition,   the assumption $u^pd^q= 1$ implies that  $u^kd^{n-k}=d^{n-\frac{k}{p}}$. Hence,
\begin{align}\label{eqhilfneu2}
    A(n,u,d,p)&\le \binom{n}{\lfloor np\rfloor}
    p^{\lfloor np\rfloor}q^{n-\lfloor np\rfloor}\sum_{k=0}^{\lfloor np\rfloor}d^{n-\frac{k}{p}}
    \le
    \binom{n}{\lfloor np\rfloor}
    p^{\lfloor np\rfloor}q^{n-\lfloor np\rfloor}\frac{1}{1-d}.
\end{align}
Further, the right-hand side of \eqref{eqhilfneu2} converges to zero since
\begin{align}
  \binom{n}{\lfloor np\rfloor}
    p^{\lfloor np\rfloor}q^{n-\lfloor np\rfloor}
    &=\BP\left(\lfloor np\rfloor-1<X_n^{(p)}\le \lfloor np\rfloor
    \right)\nonumber\\
    &=\BP\left(\frac{\lfloor np\rfloor-np-1}{\sqrt{npq}}<\frac{X_n^{(p)}-np}{\sqrt{npq}}\le \frac{\lfloor np\rfloor-np}{\sqrt{npq}}
    \right)\nonumber\\
    &\to\Phi(0)-\Phi(0)=0 \quad \text{for }n\to\infty,\label{limitalter}
\end{align}
where $\Phi$ is the cumulative  distribution function of the standard normal distribution. \medskip

Combining \eqref{eqhilfneu2} and \eqref{limitalter}, we obtain that $A(n,u,d,p)$ converges to zero as $n\to \infty$. The Berry--Esseen theorem further shows that the convergence is of the order $1/\sqrt{n}$.
\end{proof}

\medskip

As explained in the remark following the proof of Lemma \ref{L1}, it is possible to include the second case of the previous proof into the third case. However, we decided to treat these cases separately due to the same reasons as mentioned before.\medskip

Instead of using the central limit theorem and the Berry--Esseen theorem at the end of the fourth case, we could instead use that
$$
 \binom{n}{\lfloor np\rfloor}
    p^{\lfloor np\rfloor}q^{n-\lfloor np\rfloor}
    \sim  \frac{1}{\sqrt{2\pi pq}}\cdot \frac{1}{\sqrt{n}}
$$
as $n\to \infty$, to prove \eqref{limitalter}. The former asymptotic equivalence arises as a local central limit theorem in the proof of the De Moivre--Laplace theorem (see \cite[p. 55]{Shir}). Alternatively, one could use Stirling's approximation (while using the binary entropy function) to provide a more direct argument.\medskip

In order to determine the asymptotic behaviour of the expected net profit, we need to identify the limit of the second summand, denoted by $B(n,u,d,p)$, as $n\to \infty$.

\begin{lemma}\label{L2p}
If $0<d\le 1\le u$ and $u\neq  d$, then
$$
\lim_{n\rightarrow \infty} B(n,u,d,p)=
\begin{cases}
0,& \text{if } \, u^pd^{q}<1,\\
1,& \text{if } \, u^pd^{q}=1,\\
2,& \text{if } \, u^pd^{q}>1.
\end{cases}
$$
\end{lemma}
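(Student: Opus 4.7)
The plan is to rewrite $B(n,u,d,p)$ as $2\,\mathbb{P}(X_n^{(p)}\ge \widetilde{k}_0(n,u,d)+1)$ with $X_n^{(p)}\sim\mathrm{Bin}(n,p)$, and then to compare the deterministic threshold level $\widetilde{k}_0(n,u,d)/n$ to the expected value $p$ of $X_n^{(p)}/n$. The first thing I will do is translate the three hypotheses on $u^pd^{q}$ into a trichotomy on the limit $c\coloneqq \frac{\ln(1/d)}{\ln(u/d)}$: taking logarithms, the condition $u^pd^{q}<1$ is equivalent to $p\ln u<q\ln(1/d)$, i.e.\ to $p<\frac{\ln(1/d)}{\ln(u/d)}=c$, and analogously $u^pd^q=1\iff c=p$, $u^pd^q>1\iff c<p$. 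Since $\widetilde{k}_0(n,u,d)/n\to c$ as $n\to\infty$, this is exactly the comparison I will feed into a concentration/CLT argument for the binomial.

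For the two non-critical cases ($u^pd^q\neq 1$, i.e.\ $c\neq p$), I will mimic the strategy already used in Lemma~\ref{L2}. In the case $c>p$, for all sufficiently large $n$ the gap $\widetilde{k}_0(n,u,d)/n-p$ is bounded below by a positive constant, so Okamoto's (or Hoeffding's) inequality yields
\[
B(n,u,d,p)\le 2\exp\!\left(-2n\Bigl(\tfrac{\widetilde{k}_0(n,u,d)}{n}-p\Bigr)^{2}\right)\to 0
\]
at exponential rate. Symmetrically, when $c<p$, the event $\{X_n^{(p)}\le \widetilde{k}_0(n,u,d)\}$ is the rare one, and the same concentration inequality applied to the complement gives $B(n,u,d,p)=2(1-\mathbb{P}(X_n^{(p)}\le \widetilde{k}_0(n,u,d)))\to 2$, again with an exponential rate. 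Each of these two cases essentially reuses the computation done in Lemma~\ref{L2} with $1/2$ replaced by $p$, so they should require no new ideas.

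The main obstacle, and the reason the statement is interesting, is the critical case $u^pd^q=1$. Here $c=p$, so the threshold and the mean of $X_n^{(p)}$ coincide asymptotically, and a law-of-large-numbers argument no longer distinguishes them. In this regime $\widetilde{k}_0(n,u,d)=\lfloor np\rfloor$, hence $\widetilde{k}_0(n,u,d)+1-np\in(0,1]$, which gives
\[
\frac{\widetilde{k}_0(n,u,d)+1-np}{\sqrt{npq}}\longrightarrow 0\qquad \text{as }n\to\infty.
\]
I will then invoke the central limit theorem for $(X_n^{(p)}-np)/\sqrt{npq}$ to conclude
\[
\mathbb{P}\!\left(X_n^{(p)}\ge \widetilde{k}_0(n,u,d)+1\right)=\mathbb{P}\!\left(\tfrac{X_n^{(p)}-np}{\sqrt{npq}}\ge \tfrac{\widetilde{k}_0(n,u,d)+1-np}{\sqrt{npq}}\right)\longrightarrow 1-\Phi(0)=\tfrac{1}{2},
\]
so that $B(n,u,d,p)\to 1$. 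A quantitative version via the Berry--Esseen theorem even gives the convergence rate $1/\sqrt{n}$, matching the rate observed in the critical case of Lemma~\ref{L2} (which can be viewed as the particular instance $p=1/2$ of the present statement).

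Assembling these three cases completes the proof. The only delicate point is that at the critical threshold the correct tool is the CLT (or Berry--Esseen) rather than a one-sided concentration bound; everything else is a direct adaptation of the proof of Lemma~\ref{L2}, with the biased analogues of the statements and with the parameter $1/2$ replaced by $p$ throughout.
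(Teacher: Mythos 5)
Your proposal is correct and follows essentially the same route as the paper: translate the sign of $\ln(u^pd^q)$ into the comparison of $c=\tfrac{\ln(1/d)}{\ln(u/d)}=\lim_n \widetilde{k}_0(n,u,d)/n$ with $p$, handle the cases $c>p$ and $c<p$ by the law of large numbers together with an Okamoto/Chernoff-type concentration bound (giving the exponential rate), and treat the critical case $u^pd^q=1$, where $\widetilde{k}_0(n,u,d)=\lfloor np\rfloor$, via the central limit theorem with Berry--Esseen supplying the $1/\sqrt{n}$ rate. No gaps; this matches the paper's proof in structure and tools.
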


\begin{proof}
\textbf{Case 1:}  $u^pd^{q}<1$. Then
\begin{equation}\label{b1p}
\frac{1}{n}\widetilde{k}_0(n,u,d)\to \frac{\ln\left(\frac{1}{d}\right)}{\ln\left(\frac{u}{d}\right)}>p\quad \text{as }n\to\infty,
\end{equation}
where the inequality giving a lower bound on the limit of $n^{-1}\widetilde{k}_0$ can  be rewritten as $u^pd^{q}<1$. Since $X_n^{(p)}\sim\Bin(n,p)$, the law of large numbers yields
$$
B(n,u,d,p)=2\,\BP\left(X_n^{(p)}\ge \widetilde{k}_0(n,u,d)+1\right)\le 2\,\BP\left(\frac{1}{n}X_n^{(p)}\ge \frac{1}{n}\widetilde{k}_0(n,u,d)\right)\to 0\quad\text{as }n\to\infty.
$$

As in the previous proofs, we  obtain an exponential rate of convergence using Chernoff's inequality in combination with  $\widetilde{k}_0(n,u,d)-np>0$  for sufficiently large $n$ and \eqref{b1p} , that is,
$$
B(n,u,d,p)\le 2\,\BP\left(X_n^{(p)}-np\ge \widetilde{k}_0(n,u,d)-np\right)
\le 4\exp\left(-2\left(\frac{\widetilde{k}_0(n,u,d)}{n}-p
\right)^2n\right)\to 0
$$
as $n\to\infty$.

\medskip

\textbf{Case 2:}  $u^pd^{q}>1$. Then it follows that
$$
\frac{1}{n}\widetilde{k}_0(n,u,d)\to \frac{\ln\left(\frac{1}{d}\right)}{\ln\left(\frac{u}{d}\right)}
<p.
$$
Similarly to the estimate in Case 1, the law of large numbers yields
$$
B(n,u,d,p)=2\, \BP(X_n^{(p)}\ge \widetilde{k}_0(n,u,d)+1)=
2\, \BP\left(\frac{1}{n}X_n^{(p)}\ge \frac{1}{n}(\widetilde{k}_0(n,u,d)+1)\right)\to 2\cdot 1=2
$$
as $n\to\infty$.

\medskip

Furthermore, by an application of  Chernoff's inequality we obtain an exponential lower bound on $B(n,u,d,p)$ given by
\begin{align*}
B(n,u,d,p) &= 2 \, \mathbb{P}\left(X_n^{(p)} \ge \widetilde{k}_0(n,u,d)+1 \right) = 2 \left(1- \mathbb{P}\left(X_n^{(p)} \le \widetilde{k}_0(n,u,d) \right) \right)\\
 &\geq 2 \left(1- 2\cdot \exp \left(-2n\left(\frac{\widetilde{k}_0(n,u,d)}{n}-p \right)^2 \right) \right) \rightarrow 2 \quad \text{as }n\to\infty.
\end{align*}
In combination with the upper bound, $B(n,u,d,p)\leq 2$, we obtain that $B(n,u,d,p)$ converges to $2$ as $n\to \infty$ at an exponential rate.

\medskip

\textbf{Case 3:}  $u^pd^{q}=1$. In this critical case, we will argue more effectively by using the central limit theorem.
The assumption $u^pd^{1-p}=1$ implies that $\widetilde{k}_0(n,u,d)=\lfloor np\rfloor$ and therefore
\begin{align*}
    B(n,u,d,p)&=2\,\sum_{k=\lfloor np\rfloor+1}^n
    \binom{n}{k}p^kq^{n-k}
    =2\,\BP\left(X_n^{(p)}\ge \lfloor np\rfloor+1\right)\\
    &=2\left(1-\BP\left(
    X_n^{(p)}\le \lfloor np\rfloor\right)\right)
    =2\left(1-\BP\left(\frac{X_n^{(p)}-np}{\sqrt{npq}}\le \frac{\lfloor np\rfloor -np}{\sqrt{npq}}    \right)
    \right)\\&
    \to 2(1-\Phi(0))=1\quad \text{as }n\to\infty.
\end{align*}
Again, we can deduce the rate of convergence, which is given by $1/\sqrt{n}$, using the Berry--Esseen theorem.
\end{proof}

Finally,  the following generalisation of Theorem \ref{Thm1} is implied by  Lemmas  \ref{L1p} and  \ref{L2p}.

\begin{theorem}\label{Thm1p}
Let $p\in (0,1)$, $0<d\le 1\le u$ and $u\neq d$. Then the expected net profit $G(n,u,d,p)= \mathbb{E} [T(n,u,d,p)]$ after $n$ rounds satisfies
$$
\lim_{n\rightarrow \infty} G(n,u,d,p)= \begin{cases}
1,&\text{if } \, u^pd^{q}>1,\\
0,&\text{if } \, u^pd^{q}=1,\\
-1,&\text{if } \, u^pd^{q}<1.
\end{cases}
$$
\end{theorem}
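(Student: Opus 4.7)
The plan is to combine the decomposition of the expected net profit established just before the statement, namely
$$
G(n,u,d,p) = -1 + A(n,u,d,p) + B(n,u,d,p),
$$
with the two asymptotic results already proved in Lemmas \ref{L1p} and \ref{L2p}. All of the real work has been done in those lemmas; what remains here is only a careful bookkeeping of the three cases distinguished by the sign of $u^p d^q - 1$.

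First, I would observe that by linearity of expectation and the definition of $T(n,u,d,p)$, the expected net profit is indeed the sum $-1 + A(n,u,d,p) + B(n,u,d,p)$, since the indicator expectations in the two sums producing $A$ and $B$ yield the binomial probabilities $\binom{n}{k}(pu)^k(qd)^{n-k}$ and $2\binom{n}{k}p^k q^{n-k}$, respectively, and $\widetilde{k}_0(n,u,d)$ does not depend on $p$. This matches the expression given just above the theorem.

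Next, Lemma \ref{L1p} asserts that $A(n,u,d,p) \to 0$ as $n\to\infty$ for every admissible triple $(u,d,p)$, irrespective of the sign of $u^p d^q - 1$. Hence the asymptotic behaviour of $G(n,u,d,p)$ is governed entirely by $B(n,u,d,p)$. Invoking Lemma \ref{L2p}, we distinguish three cases: in the case $u^p d^q > 1$ we obtain $B(n,u,d,p) \to 2$, giving the limit $-1 + 0 + 2 = 1$; in the boundary case $u^p d^q = 1$ we obtain $B(n,u,d,p) \to 1$, yielding $-1 + 0 + 1 = 0$; and in the case $u^p d^q < 1$ we obtain $B(n,u,d,p) \to 0$, producing the limit $-1 + 0 + 0 = -1$. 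This exhausts the trichotomy claimed in the theorem.

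There is no hard step left: the nontrivial analytic obstacles (the analytic inequalities of Lemmas \ref{ungl1} and \ref{ungl2}, the application of the Chernoff--Hoeffding estimate in Case 3 of Lemma \ref{L1p}, and the central-limit argument in the critical case $u^p d^q = 1$) have all been handled in the preceding lemmas. The only minor thing worth mentioning in the write-up is that the three thresholds appearing in Lemma \ref{L2p} are indeed the same as the three thresholds in the theorem, and that the case distinction is exhaustive because $u\neq d$, $u\geq 1\geq d>0$ and $p\in(0,1)$ ensure $u^p d^q$ is a well-defined positive real number for which exactly one of the three relations with $1$ holds.
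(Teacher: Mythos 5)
Your proposal is correct and matches the paper's own argument: the paper likewise obtains Theorem \ref{Thm1p} directly from the decomposition $G(n,u,d,p)=-1+A(n,u,d,p)+B(n,u,d,p)$ together with Lemmas \ref{L1p} and \ref{L2p}, with exactly the same three-case bookkeeping.
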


\subsection{Analysis of the variance}

In the final part of our analysis, we study the variance of the random variable $T_n$. Since $T_n$ was defined as the sum of random variables (and a constant, which does not affect the variance), the variance of $T_n$ is composed of the variances of the single random variables and the covariances of any two distinct  random variables. If we further use that $\{X_n^{(p)}=k\}\cap  \{X_n^{(p)}=\ell\}=\emptyset$ for $k\neq \ell$, then we obtain
\begin{align*}
 \Var(T_n)&=\sum_{k=0}^{\widetilde{k}_0(n,u,d)}\binom{n}{k}(pu^2)^k(qd^2)^{n-k}+4\,\BP(X_n^{(p)}\ge \widetilde{k}_0(n,u,d)+1)\BP(X_n^{(p)}\le \widetilde{k}_0(n,u,d))\\
&\qquad -\sum_{k=0}^{\widetilde{k}_0(n,u,d)}\binom{n}{k}^2(pu)^{2k}(qd)^{2(n-k)}\\
&\qquad -4\sum_{k=0}^{\widetilde{k}_0(n,u,d)}\binom{n}{k}(pu)^{k}(qd)^{n-k}\cdot \BP(X_n^{(p)}\ge \widetilde{k}_0(n,u,d)+1)\\
&\qquad -2\sum_{0\le k<\ell\le \widetilde{k}_0(n,u,d)}
\binom{n}{k}\binom{n}{\ell}(pu)^{k+\ell}(qd)^{2n-k-\ell},
\end{align*}
where
$$
\BP(X_n^{(p)}\ge \widetilde{k}_0(n,u,d)+1)=\sum_{k=\widetilde{k}_0(n,u,d)+1}^n\binom{n}{k}p^kq^{n-k}.
$$

Using the previous representation of $\mathbb{V}(T_n)$, we explored how the parameters $n,u,d,p$ of the game  affect the variance of $T_n$. The results can be found in the Figures  \ref{fig:konvergenz varianz elsberg} - \ref{fig varianz d variiert}. Note that we examined the entire variance as well as the behaviour of the five summands, which we denoted by $v_i(n,u,d,p)$, $i= 1,\ldots,5$. \medskip

The results of our numerical analysis motivated the following theorem concerning the asymptotic behaviour of the variance of $T_n$ as well as of the random variable $T_n$ itself (with respect to convergence in distribution). \medskip

\begin{theorem}\label{ThmVarp}
Let $p\in (0,1)$, $0<d\le 1\le u$ and $u\neq d$. Then the asymptotic behaviour of the variance of the net profit $T_n=T(n,u,d,p)$ after $n$ rounds is given by
$$
\lim_{n\rightarrow \infty} \Var(T_n)=\begin{cases}
0,&\text{if } \, u^pd^{q}\neq 1,\\
1,&\text{if } \, u^pd^{q}=1.
\end{cases}
$$
Moreover, the limiting distribution of the random variable $T_n$ is characterised by
$$
T_n\to \begin{cases}
-1,&\text{if } \, u^pd^{q}< 1,\\
Z, & \text{if } \, u^p d^q =1,\\
1,&\text{if } \, u^pd^{q}>1,
\end{cases}
$$
where the limit is to be understood in the sense of convergence in distribution.  The distribution of the random variable $Z$ is the two-point distribution given by $\mathbb{P}(Z=1) = \frac{1}{2} = \mathbb{P}(Z=-1)$.
\end{theorem}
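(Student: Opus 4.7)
My plan is to first establish the distributional convergence of $T_n$ in each of the three regimes and then deduce the variance statements from it by exploiting boundedness. The starting observation is that $T_n$ is uniformly bounded: by definition of $\widetilde{k}_0(n,u,d)$ we have $u^kd^{n-k}\le 1$ whenever $k\le \widetilde{k}_0(n,u,d)$, so on the losing event $\{X_n^{(p)}\le \widetilde{k}_0(n,u,d)\}$ the variable $T_n=-1+u^{X_n^{(p)}}d^{n-X_n^{(p)}}$ lies in $[-1,0]$, while on the winning event $T_n=1$. In particular $|T_n|\le 1$. Hence any convergence in distribution $T_n\Rightarrow T$ automatically yields $\mathbb{E}[T_n^k]\to\mathbb{E}[T^k]$ for each $k\in\mathbb{N}$, and therefore $\Var(T_n)\to\Var(T)$, which will give the variance part at once ($\Var(T)=0$ when $T$ is constant and $\Var(Z)=1$ for the two-point limit).

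\textbf{Non-critical cases.} When $u^pd^{q}<1$, Lemma \ref{L2p} already tells us that $\mathbb{P}(\text{win})=\tfrac12 B(n,u,d,p)\to 0$, so the winning event is asymptotically negligible. On the losing event I would write
$$
u^{X_n^{(p)}}d^{n-X_n^{(p)}}=\bigl(u^{X_n^{(p)}/n}d^{1-X_n^{(p)}/n}\bigr)^n,
$$
and combine the weak law of large numbers, $X_n^{(p)}/n\to p$ in probability, with the continuity of $(x,y)\mapsto u^xd^{1-x}$ at $x=p$: since the limit $u^pd^{q}<1$ is strictly less than $1$, a straightforward $\varepsilon$-argument gives a deterministic $c\in(0,1)$ such that the base lies below $c$ with probability $\to 1$, yielding $u^{X_n^{(p)}}d^{n-X_n^{(p)}}\to 0$ in probability and hence $T_n\to -1$ in probability. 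The case $u^pd^{q}>1$ is symmetric: again by Lemma \ref{L2p}, $\mathbb{P}(\text{win})\to 1$, and on the winning event $T_n=1$ identically, so $T_n\to 1$ in probability.

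\textbf{Critical case $u^pd^{q}=1$.} Here $p\ln u+q\ln d=0$ implies $\widetilde{k}_0(n,u,d)=\lfloor np\rfloor$, and a short rearrangement using $n\ln d=-np(\ln u-\ln d)$ gives the clean identity
$$
u^{X_n^{(p)}}d^{n-X_n^{(p)}}=\exp\!\bigl((X_n^{(p)}-np)(\ln u-\ln d)\bigr).
$$
The central limit theorem yields $\mathbb{P}(X_n^{(p)}\le\lfloor np\rfloor)\to\Phi(0)=\tfrac12$, so the losing and winning events each have asymptotic probability $\tfrac12$; on the winning event $T_n=1$. On the losing event, writing $W_n=(X_n^{(p)}-np)/\sqrt{npq}\Rightarrow N(0,1)$, the quantity above equals $\exp(\sqrt{npq}\,W_n(\ln u-\ln d))$, and since $W_n$ is nonpositive there (up to the vanishing gap $\lfloor np\rfloor-np\in[-1,0)$), I need to show this exponential tends to $0$ in probability. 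For $\varepsilon>0$ the event $\{\exp(\sqrt{npq}\,W_n(\ln u-\ln d))>\varepsilon\}\cap\{\text{loss}\}$ corresponds to $W_n$ lying in an interval of length $O(1/\sqrt n)$ around $0$; by the CLT (or a Berry--Esseen bound) the probability of this interval is also $O(1/\sqrt n)$, hence negligible. Thus $T_n\to -1$ in probability on the losing event, and combining with the winning event we obtain $T_n\Rightarrow Z$ with $\mathbb{P}(Z=\pm 1)=\tfrac12$.

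\textbf{Main obstacle.} The real work is in the critical regime: although $W_n$ is of order $1$ and therefore $\sqrt{n}W_n(\ln u-\ln d)$ is of order $\sqrt n$ with a random sign, one still needs to rule out the possibility that $W_n$ is so close to $0$ (of order $1/\sqrt n$) that the exponential has a non-trivial limit distribution. The clean way to do this is a CLT-based small-ball estimate showing $\mathbb{P}(|W_n|\le C/\sqrt n)=O(1/\sqrt n)$, which the Gaussian density immediately provides. The remaining steps (upgrading convergence in distribution to convergence of variances via $|T_n|\le 1$, and computing $\Var(Z)=1$) are then immediate.
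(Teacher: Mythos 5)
Your proposal is correct, but it proceeds in the opposite logical order from the paper and with different tools. The paper first computes $\Var(T_n)$ directly: it decomposes $T_n=-1+C_n+D_n$ with $C_n=\sum_{k\le \widetilde{k}_0}\mathbf{1}\{X_n^{(p)}=k\}u^kd^{n-k}$ and $D_n=2\cdot\mathbf{1}\{X_n^{(p)}\ge \widetilde{k}_0+1\}$, controls the covariance by Cauchy--Schwarz, shows $\Var(C_n)\to 0$ via the identity $\mathbb{E}[C_n^2]=A(n,u^2,d^2,p)$ and Lemma \ref{L1p} applied to $(u^2,d^2)$, computes $\Var(D_n)=4\,\mathbb{P}(\mathrm{win})\mathbb{P}(\mathrm{loss})$ case by case, and only then derives the distributional limits --- in the non-critical cases from Theorem \ref{Thm1p} plus $\Var(T_n)\to 0$ (Chebyshev), and in the critical case via Slutsky together with a characteristic-function computation for $D_n-1$. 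You instead prove the distributional convergence first, by elementary LLN/CLT arguments (including a small-ball estimate $\mathbb{P}(np-M_\varepsilon<X_n^{(p)}\le\lfloor np\rfloor)\to 0$ in the critical case, replacing the paper's Chebyshev bound on $C_n$), and then obtain the variance statement as a corollary of the uniform bound $|T_n|\le 1$, which upgrades weak convergence to convergence of the first two moments. Your route is shorter and avoids the explicit five-term variance formula, Lemma \ref{L1p} with squared parameters, and characteristic functions; the observation $|T_n|\le 1$ (valid since $u^kd^{n-k}\le 1$ precisely when $k\le\widetilde{k}_0$) is the key simplification. What the paper's route buys in exchange is quantitative information: exponential rates of convergence off the critical curve and a $1/\sqrt{n}$ rate on it, as well as the explicit summands $v_1,\ldots,v_5$ that are matched against the numerical illustrations; your argument, resting on the plain CLT and bounded convergence, yields the limits but no rates (unless you invoke Berry--Esseen, as you note parenthetically). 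Both proofs are complete and correct.
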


\begin{proof}
 We define
 $$
 C_n\coloneqq C(n,u,d,p)\coloneqq\sum_{k=0}^{\widetilde{k}_0(n,u,d)}\mathbf{1}\{X_n^{(p)}=k\} u^kd^{n-k}
 $$
 and
 $$
 D_n\coloneqq D(n,u,d,p)\coloneqq2\cdot
\mathbf{1}\{X_n^{(p)}\ge \widetilde{k}_0(n,u,d)+1\}.
 $$
 Then we can write $T_n$ as
 $$
 T_n=-1+C_n+D_n,
 $$
and therefore
$$
\Var(T_n)=\Var(C_n)+\Var(D_n)+2\cdot \BCov(C_n,D_n).
$$
Using the Cauchy--Schwarz inequality, we can estimate the covariance of $C_n$ and $D_n$ using the associated variances and obtain
$$
\BCov(C_n,D_n)^2\le \Var(C_n)\cdot\Var(D_n).
$$
To determine the asymptotic behaviour of $\mathbb{V}(T_n)$, we can focus our attention on the limits of $\mathbb{V}(C_n)$ and $\mathbb{V}(D_n)$ as $n\to \infty$ (as we will see).\medskip

The variance of $C_n$ is explicitly given by
 \begin{align*}
 \Var(C_n)&=    \sum_{k=0}^{\widetilde{k}_0(n,u,d)}\binom{n}{k}p^kq^{n-k}
 u^{2k}d^{2(n-k)}
  - \sum_{k=0}^{\widetilde{k}_0(n,u,d)}\binom{n}{k}^2(pu)^{2k}(qd)^{2(n-k)}\\
 &\qquad - 2\sum_{0\le k<\ell\le \widetilde{k}_0(n,u,d)}
\binom{n}{k}\binom{n}{\ell}(pu)^{k+\ell}(qd)^{2n-k-\ell}\ge 0.
 \end{align*}
In order to show that $\Var(C_n)\to 0$, as $n\to \infty$, it suffices to prove that the first sum in the previous representation of $\Var(C_n)$ converges to zero as $n\to \infty$. Here we can apply again the result from Lemma \ref{L1}, since apparently
$\widetilde{k}_0(n,u^2,d^2) = \widetilde{k}_0(n,u,d)$  and therefore
 $$
  \sum_{k=0}^{\widetilde{k}_0(n,u,d)}\binom{n}{k}p^kq^{n-k}
 u^{2k}d^{2(n-k)}=A(n,u^2,d^2,p)\to 0\quad\text{for }n\to\infty.
 $$
The application of Lemma \ref{L1p} is permissible since $0<d^2\le 1\le u^2$ and $d^2\neq u^2$ are satisfied under the assumptions of the theorem.\medskip

Furthermore, the variance of $D_n$ is  given by
 $$
 \Var(D_n)=4\,\BP\left(X_n^{(p)}\ge \widetilde{k}_0(n,u,d)+1\right)\BP\left(X_n^{(p)}\le \widetilde{k}_0(n,u,d)\right).
 $$

To examine the asymptotic behaviour of the variance of $D_n$, we  distinguish  three cases. \medskip

 (a) If $u^pd^{q}<1$, it follows that $\BP\left(X_n^{(p)}\ge \widetilde{k}_0(n,u,d)+1\right)\to 0$ as $n\to\infty$, by the same arguments as in the first case of the proof of Lemma \ref{L2p}.\smallskip

(b) If $u^pd^{q}>1$, we get  $\BP\left(X_n^{(p)}\ge \widetilde{k}_0(n,u,d)+1\right)\to 1$  as $n\to\infty$. This assertion can be shown analogously to the second case in the proof of Lemma \ref{L2p}. Therefore, it follows that $\BP\left(X_n^{(p)}\le \widetilde{k}_0(n,u,d)\right)\to 0$  as $n\to\infty$.\smallskip

The results in (a) and (b) imply that if $u^p d^q \neq 1$, then $\Var(D_n)\to 0$ as $n\to\infty$ with an exponential rate of convergence.\smallskip

(c) Finally, we need to treat the case where $u^pd^{q}=1$. Then it follows that $\BP\left(X_n^{(p)}\ge \widetilde{k}_0(n,u,d)+1\right)\to \frac{1}{2}$ and $\BP\left(X_n^{(p)}\le \widetilde{k}_0(n,u,d)\right)\to \frac{1}{2}$ as  $n\to\infty$. These assertions can be shown similarly to the third case in the proof of Lemma \ref{L2p}. Hence, it follows that $\Var(D_n)\to 1$  as $n\to\infty$. The convergence is of order $1/\sqrt{n}$.\medskip

Now we want to find the limit in distribution of the sequence of random variables $(T_n)_{n\in \mathbb{N}}$.\medskip

If $u^p d^q \neq 1$, we can immediately deduce the limit in distribution using Theorem \ref{Thm1p} and $\Var(T_n)\to 0$ as $n\to \infty$. In this case, we conclude the formally stronger result of convergence in probability.

\medskip

Now we are left with the case $u^p d^q = 1$.\medskip

First, we recall that $\mathbb{E}[C_n]=A(n,u,d,p)\rightarrow 0$ as $n\rightarrow \infty$ (according to Lemma \ref{L1}) and $\mathbb{V}(C_n) \rightarrow 0$ as $n\rightarrow \infty$. This directly implies that $C_n$ converges in probability to zero as $n\to \infty$. Due to Sluzki's theorem \cite[S.~209]{Henze_basic} (or \cite[Chap.~5, Thm.~11.4]{Gut}, \cite[Thm.~13.18]{Klenke}) it remains to show that $D_n - 1$ converges in distribution to $Z$. For this purpose we use the characteristic functions $\varphi_{D_n-1}$ of $D_n-1$ and $\varphi_Z$ of $Z$. For $t\in \mathbb{R}$ it follows that
\begin{align*}
    \varphi_{D_n-1}(t) &= \mathbb{E}\left[\mathrm{e}^{it(D_n - 1)} \right]\\
    &= \mathbb{P}\left(X_n^{(p)} \geq \widetilde{k}_0(n,u,d) +1 \right)\mathrm{e}^{it} + \mathbb{P}\left(X_n^{(p)} \leq \widetilde{k}_0(n,u,d) \right)\mathrm{e}^{-it} \\
    & \longrightarrow \frac{1}{2}\mathrm{e}^{it} + \frac{1}{2}\mathrm{e}^{-it} = \varphi_Z(t)
\end{align*}
as $n\rightarrow \infty$, where again we used the asymptotic behaviour of $\mathbb{P}\left(X_n^{(p)} \geq \widetilde{k}_0(n,u,d) +1 \right)$ and $\mathbb{P}\left(X_n^{(p)} \leq \widetilde{k}_0(n,u,d)\right)$ for $n\rightarrow \infty$ which we derived in Part (c). Finally, the  Lévy--Cramér continuity theorem \cite[S.~21]{Henze_basic} (or \cite[Chap.~5,  Thm.~9.1]{Gut}, \cite[Thm.~15.24]{Klenke}) implies the assertion.
\end{proof}

\medskip

\noindent
\textbf{Illustrations:} The following Figures \ref{fig:konvergenz varianz elsberg} - \ref{fig varianz d variiert} display the behaviour of $\Var(T_n)$ in terms of the different underlying parameters $n,u,d$ and $p$ of the game. \medskip

Figure \ref{fig:konvergenz varianz elsberg}  shows the asymptotic behaviour of the variance of $T_n$ as $n$ tends to infinity in the initial situation described in \enquote{GREED} ($u=1.5,\, d=0.6,\, p=0.5$). The left-hand side illustrates the entire variance while the right-hand shows the behaviour of the five summands introduced at the beginning of this subsection. As we would expect according to Theorem \ref{ThmVarp}, the convergence of the variance towards zero is clearly visible.\medskip

The remaining Figures \ref{fig varianz p variiert} - \ref{fig varianz d variiert} display the variance of $T_n$ for $n=200$ in terms of the parameters $p$, $u$ and $d$. Again, the left-hand side shows the entire variance while the right-hand side illustrates the five summands $v_1,\ldots,v_5$ separately. \medskip

The right-hand side of Figures \ref{fig varianz p variiert} - \ref{fig varianz d variiert} suggest that the quantity $v_2$ has the strongest influence on the variance. In contrast, the terms $v_1$, $v_3$ and $v_5$ only take values close to zero. These observations coincide with the proof of Theorem \ref{ThmVarp}, where we proved that $v_1,\, v_3,\, v_4$ and $v_5$ converge towards zero for any admissible choice of parameters while $v_2$ converges towards $1$ in the special case $u^p d^q = 1$. \medskip

Moreover, we want to point out that the threshold phenomenon described in Theorem  \ref{ThmVarp} is already clearly visible after $n=200$ rounds.

\begin{figure}[!bth]
  \centering
   \subfigure{\includegraphics[width = 0.45\textwidth]{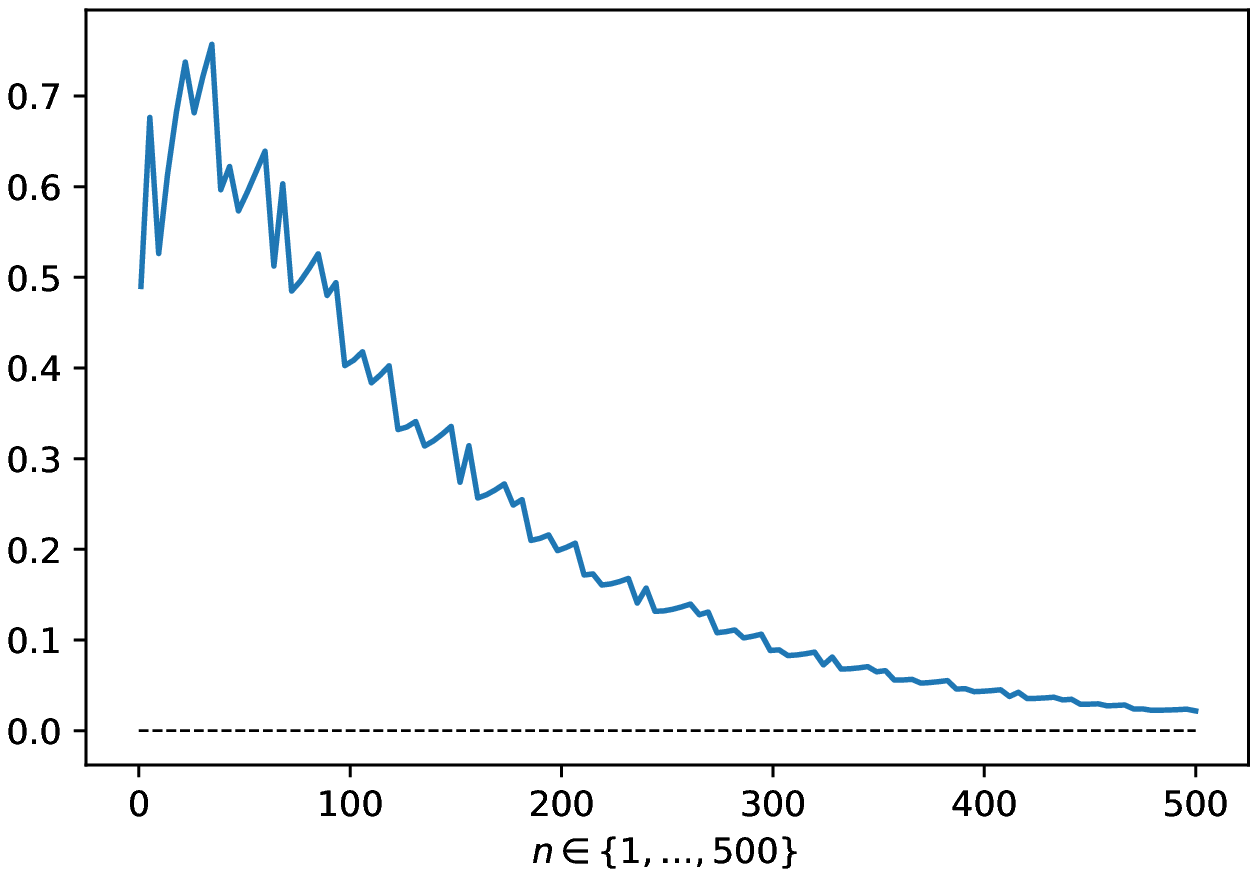}}\qquad
   \subfigure{\includegraphics[width = 0.45\textwidth]{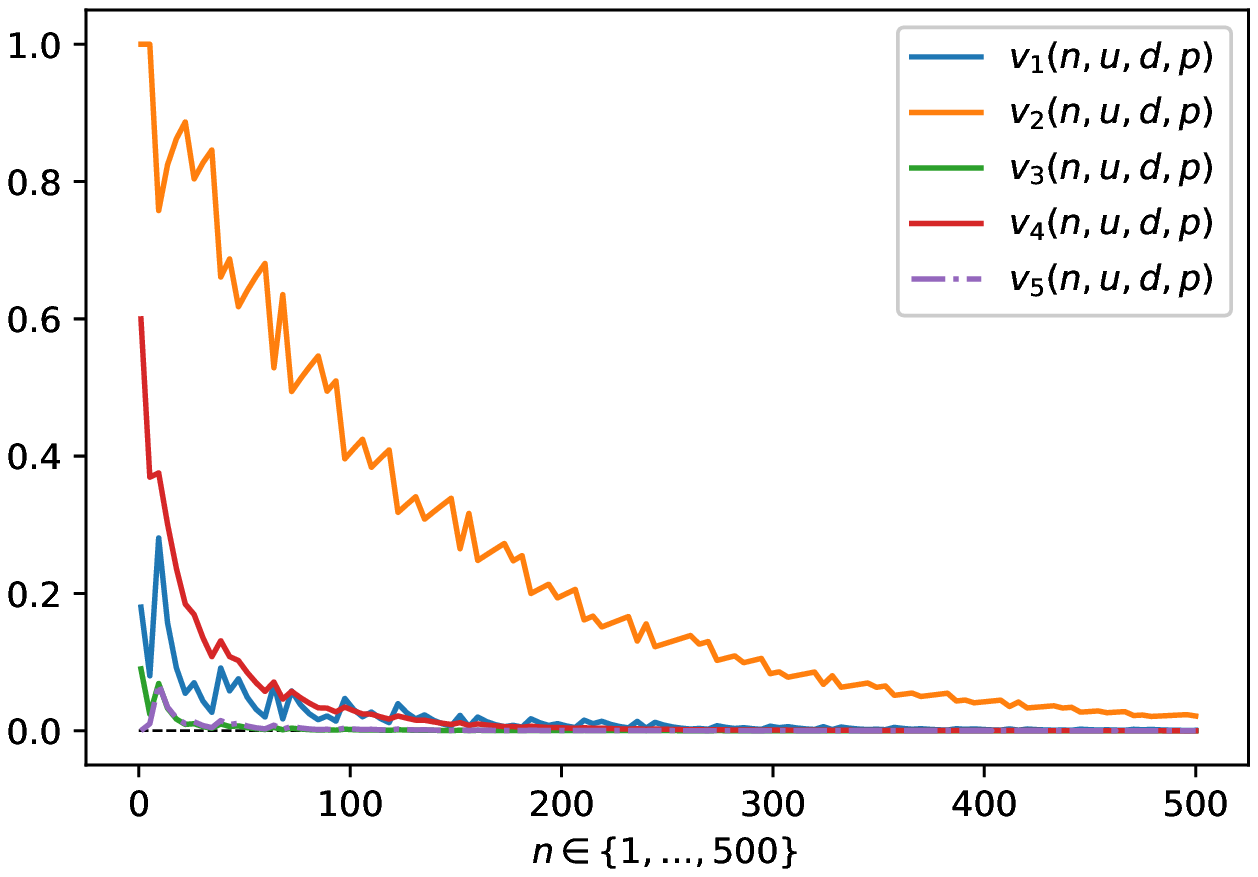}}
  \caption{Asymptotic behaviour of the variance of $T_n$  for $u=1.5$, $ d=0.6$,  $p=0.5$ (as in \cite{Els}) as $n\rightarrow \infty$ (for clarity, only every fifth value was shown). On the left is the variance itself, on the right the individual summands $v_1,\ldots,v_5$ that contribute to the variance.}
  \label{fig:konvergenz varianz elsberg}
\end{figure}

\begin{figure}[!bth]
  \centering
   \subfigure{\includegraphics[width = 0.45\textwidth]{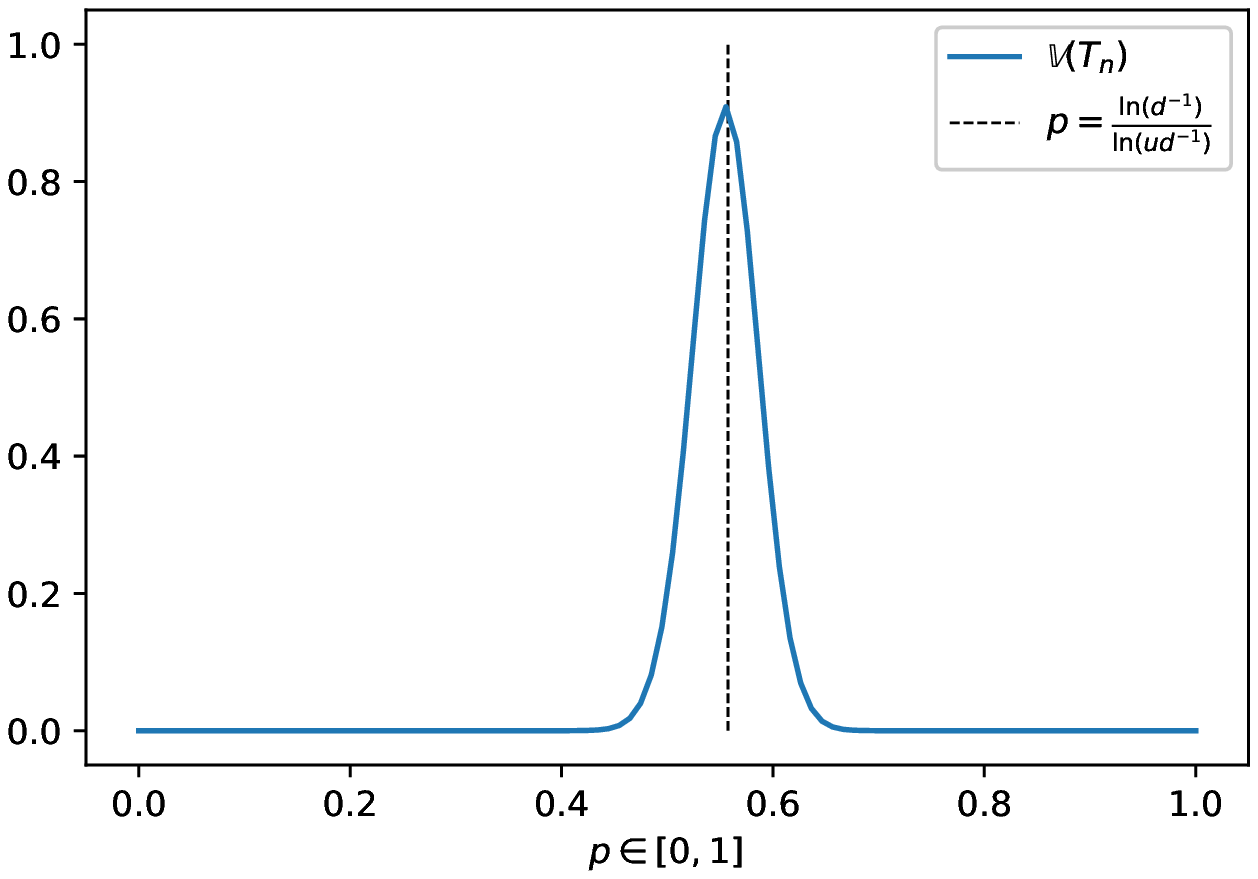}}\qquad
   \subfigure{\includegraphics[width = 0.45\textwidth]{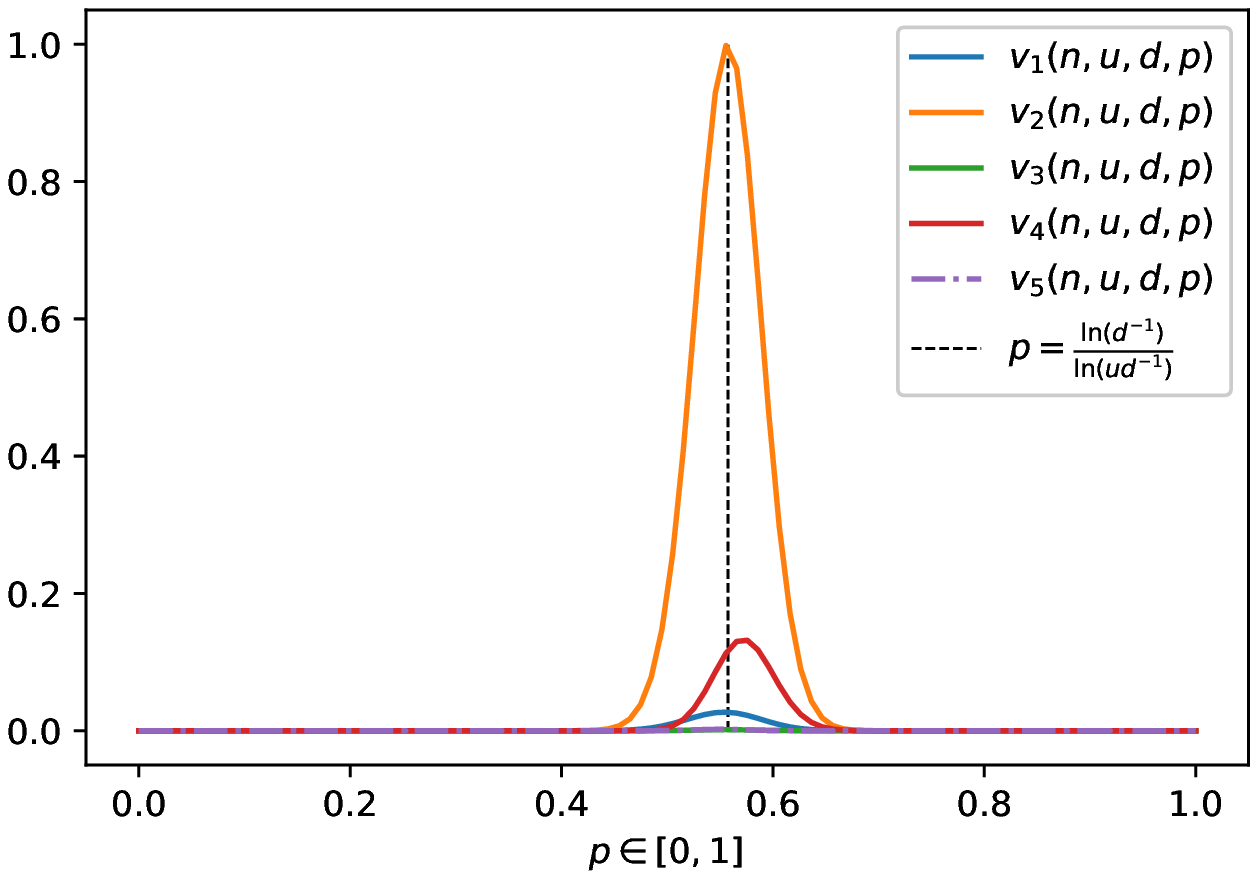}}
  \caption{Illustration of the variance of $T_n$ as a function in $p$, where $u=1.5,\, d=0.6,\, n=200$. The left-hand side shows the entire variance while the right-hand side displays the summands $v_1,\ldots,v_5$ separately.}
  \label{fig varianz p variiert}
\end{figure}

\begin{figure}[!bth]
  \centering
   \subfigure{\includegraphics[width = 0.45\textwidth]{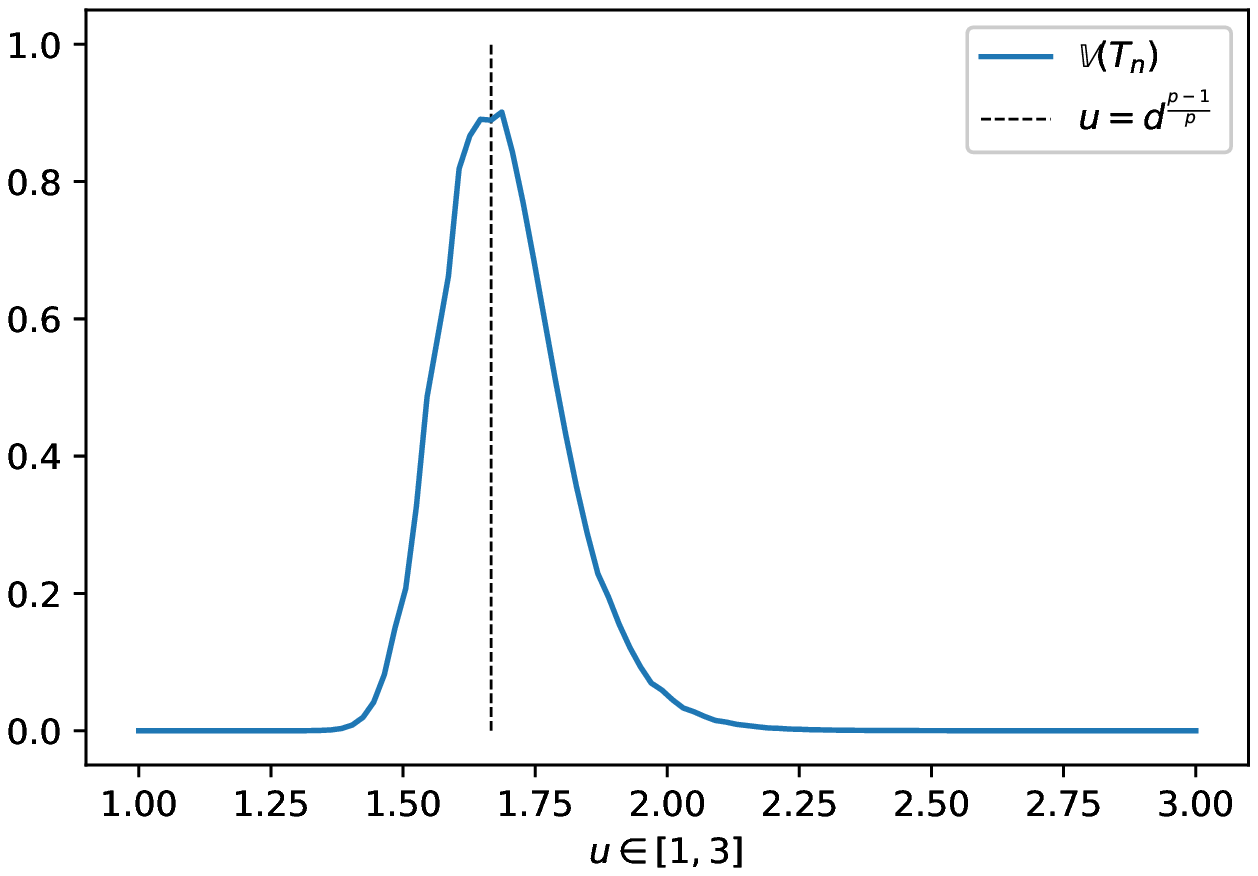}}\qquad
   \subfigure{\includegraphics[width = 0.45\textwidth]{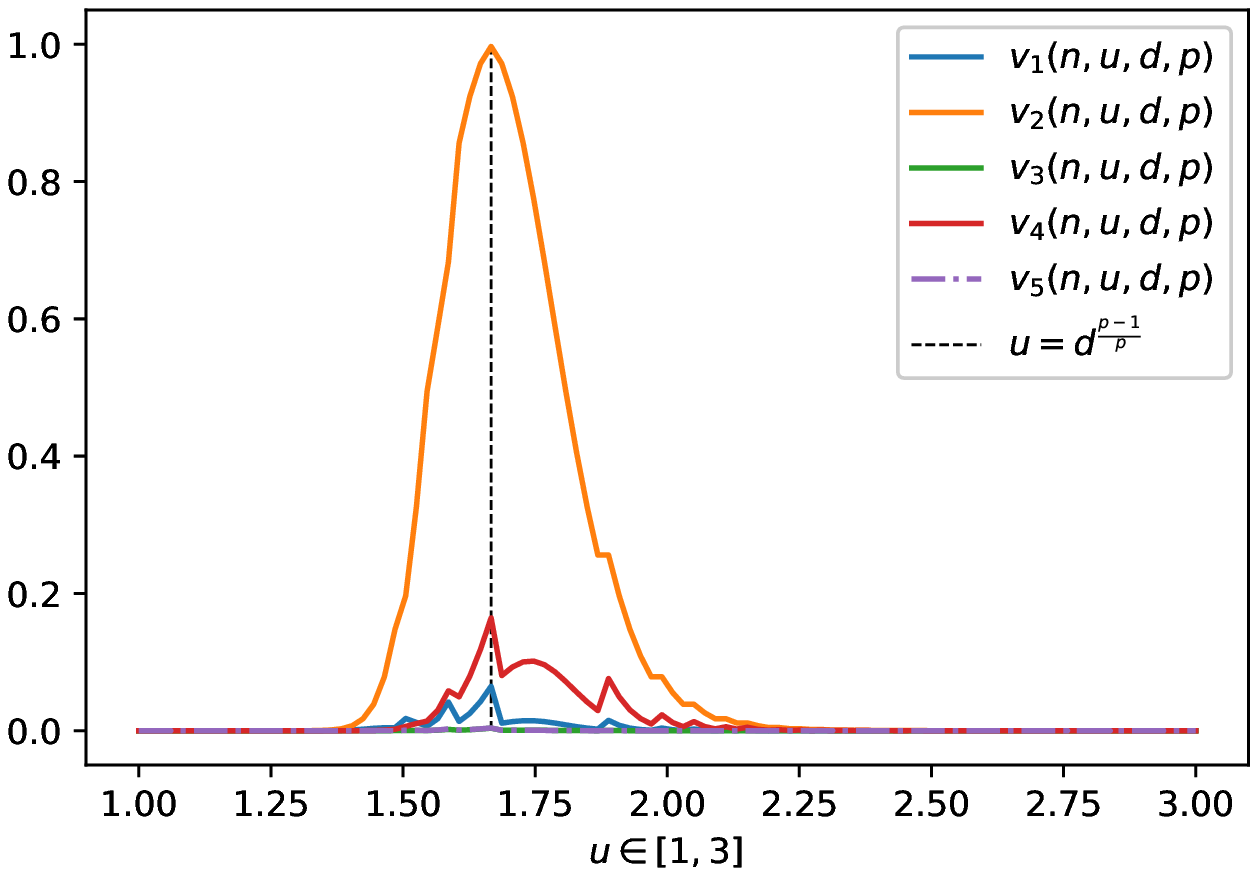}}
  \caption{Illustration of the variance of $T_n$ as a function in $u$, where $d=0.6,\, n=200,\, p=0.5$. The left-hand side shows the entire variance while the right-hand side displays the summands $v_1,\ldots,v_5$ separately.}
  \label{fig varianz u variiert}
\end{figure}

\begin{figure}[!bth]
  \centering
   \subfigure{\includegraphics[width = 0.45\textwidth]{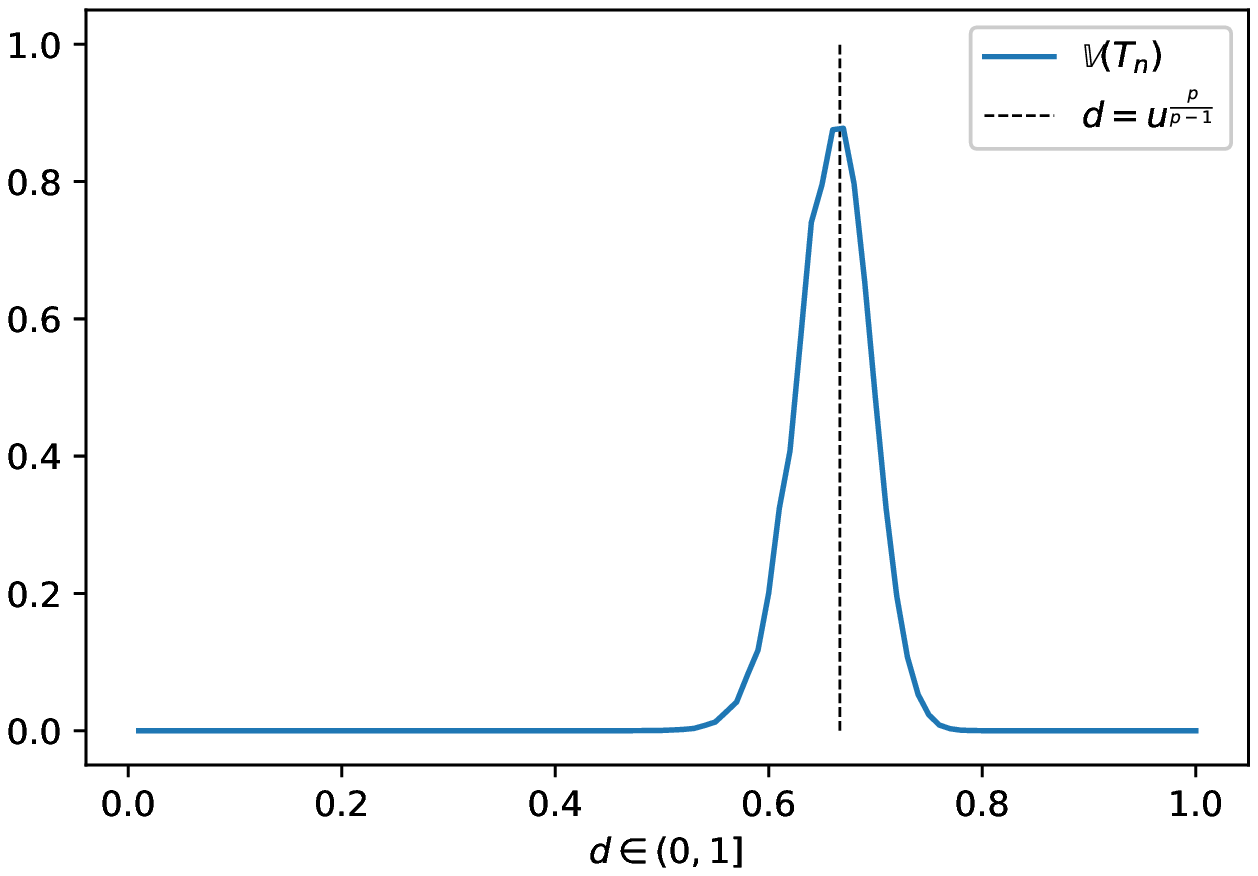}}\qquad
   \subfigure{\includegraphics[width = 0.45\textwidth]{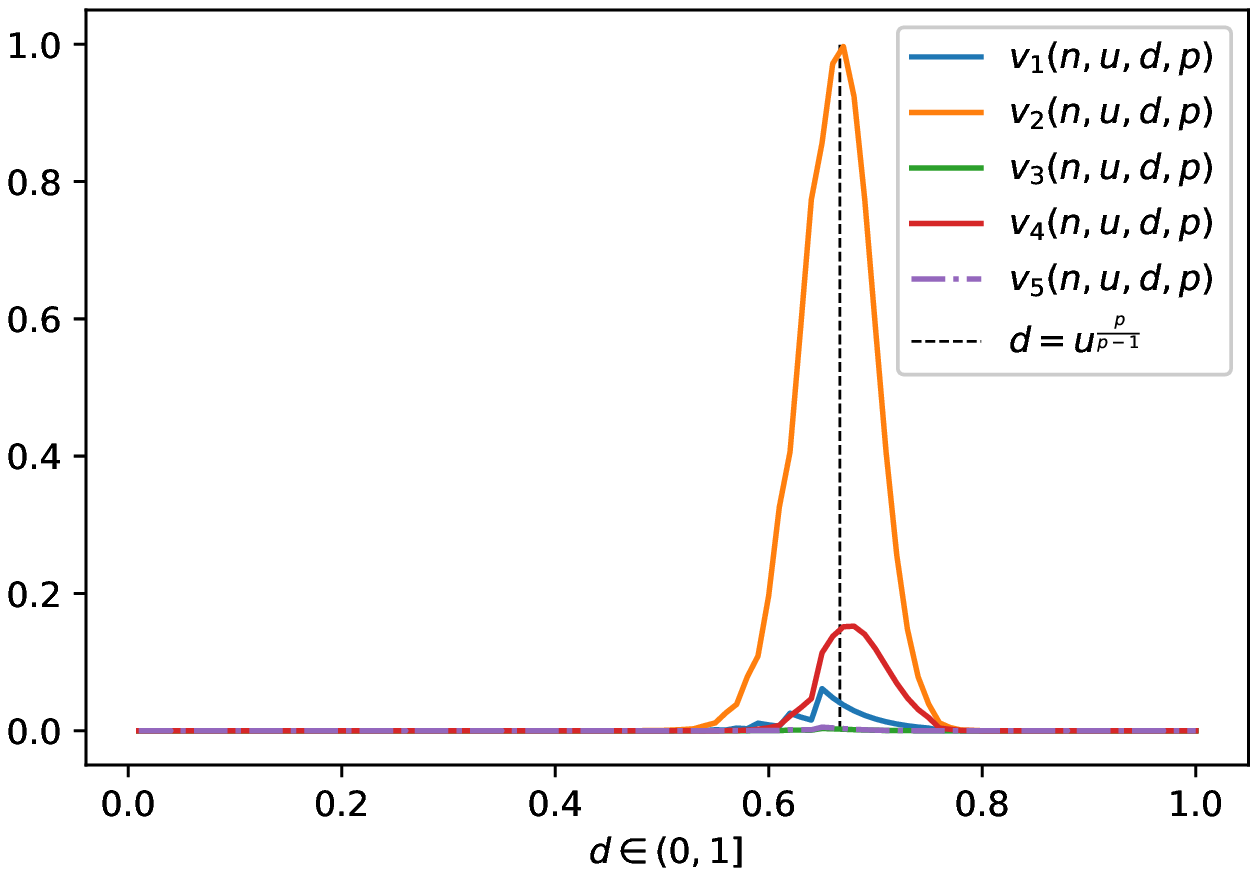}}
  \caption{Illustration of the variance of $T_n$ as a function in $d$, where $u=1.5,\, n=200,\, p=0.5$. The left-hand side shows the entire variance while the right-hand side displays the summands $v_1,\ldots,v_5$ separately.}
  \label{fig varianz d variiert}
\end{figure}

\section{Some simulations and a generalised birthday problem}\label{sec_simulationen}

The following section includes some simulations of the  development of the score over time as well as the simulation of the final net profit in $100$ repetitions of Elsberg's gamble. As we are working in the initial situation described by Elsberg, the underlying scenario is characterised by the game parameters $a=100$, $n=100$, $u=1.5$, $d=0.6$ and $p=0.5$.\medskip

The simulations were generated using Python.\medskip

The \texttt{random.binomial(n,p)} function integrated in the Numpy library was used to generate binomially distributed pseudo-random numbers.

\medskip

Figure \ref{fig simulationen zeitlicher verlauf}  shows eight simulations of the score over time, that is, as a function of  successive rounds. It should not come as a surprise that at least two of the eight simulations exhibit the same final score. The final score only depends on the number $k$ of the $n$ rounds in which the coin shows \enquote{heads}. The probability of the event of  \enquote{heads} occurring $ k $ times for a fair coin is just $ p_k =  \binom{n}{k} 2^{-n} $ for $ k \in \{0,1, \ldots, n \} $. If the eight simulations are done independently, the probability $ P_8 $ that all eight final scores are different is $P_8=8!\sum_{|I|=8}\prod_{i\in I}p_i$, where the summation extends over all $ 8 $-element subsets $ I $ of $  \{0,1,\ldots,n\}$. Maclaurin's inequality \cite[(5)]{Rosset}
 implies that
$$
\left[\frac{1}{\binom{n+1}{8}}\sum_{|I|=8}\prod_{i\in I}p_i\right]^{\frac{1}{8}}\le \frac{1}{n+1}\sum_{i=0}^np_i=\frac{1}{n+1},
$$
hence $P_8\le 8!\binom{n+1}{8}\left(\frac{1}{n+1}\right)^8$ (see  \cite{Holst,Joag} for alternative arguments), where equality holds for positive probabilities $p_i$ if and only if $p_0=\cdots=p_n={1}/{(n+1)}$. For $n=100$,
$$1-P_8\ge 1-\frac{100\cdots 94}{101^7}\ge 0.24$$
thus is a lower bound for the probability that for at least two of eight independent simulations the same final score is attained. In other words, we have estimated the probability of at least two equal final scores for non-uniform random variables in terms of the uniform case. The present question represents a \textit{generalised birthday problem} (see \cite {Henze, Mandjes, Stein}). A recursive numerical calculation with the help of \cite[Proposition 3.1]{Mase} results in $ 1-P_8 \approx 0.83 $ and is therefore considerably larger than in the case of uniform distributions. The naive direct calculation of this probability by summation over all $ 8 $-element subsets of $ \{0,1,\ldots, 100 \} $ turns out to be infeasible, so recursive methods and approximations \cite{Mase, Nunnikhoven, Stein} become relevant.

\medskip

Subsequently, the scores at the end of each round were calculated as part of a simulation study over $ 100$ rounds.

\medskip

In Figure \ref{fig simulation win} the net profits are shown which are  achieved in $ 100 $ repetitions of Elsberg's game of chance.  In exactly $ 14 $ of the $ 100 $ simulations of the gamble, the maximal net profit of  $ 100$ euros is realized.

\begin{figure}[!th]
  \centering
   \subfigure{\includegraphics[width = 0.45\textwidth]{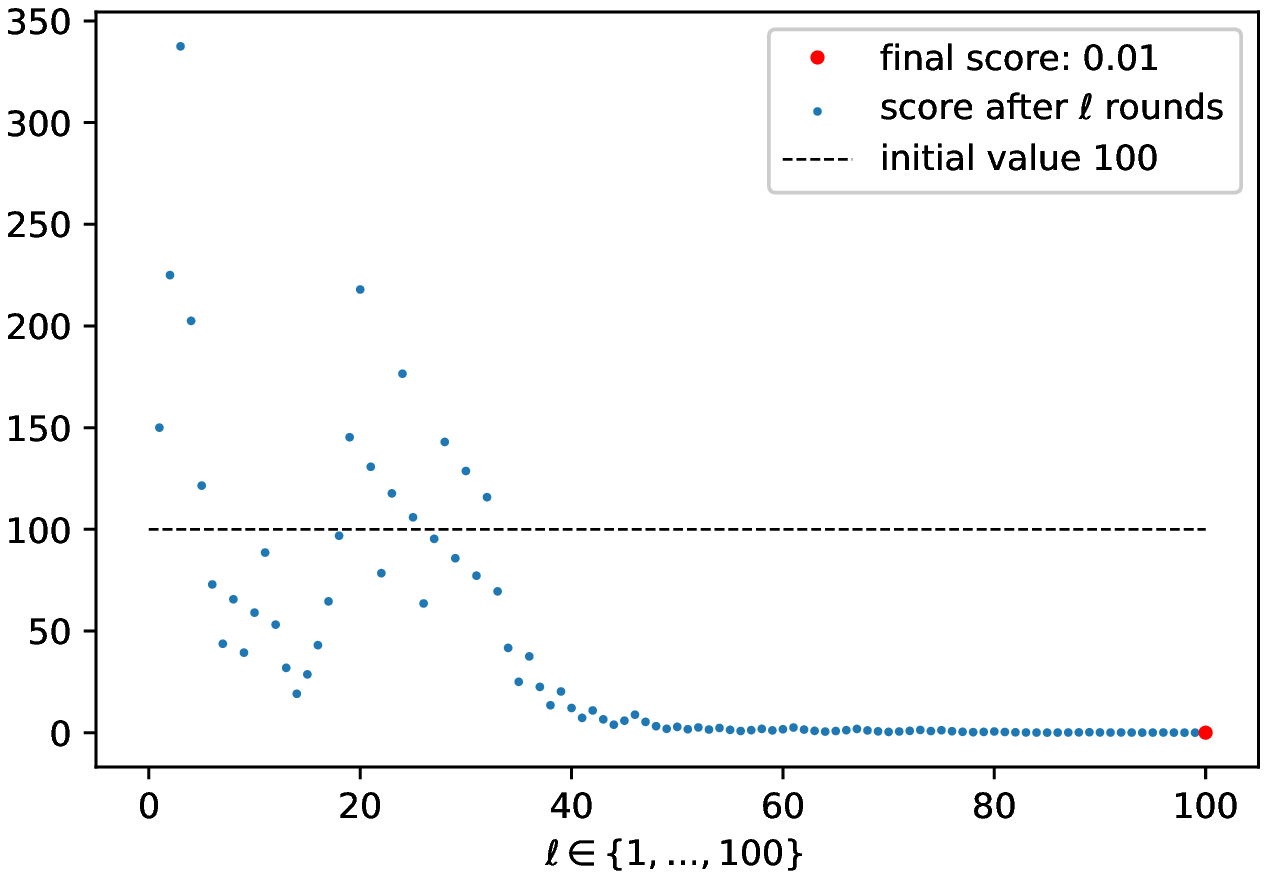}}\qquad
   \subfigure{\includegraphics[width = 0.45\textwidth]{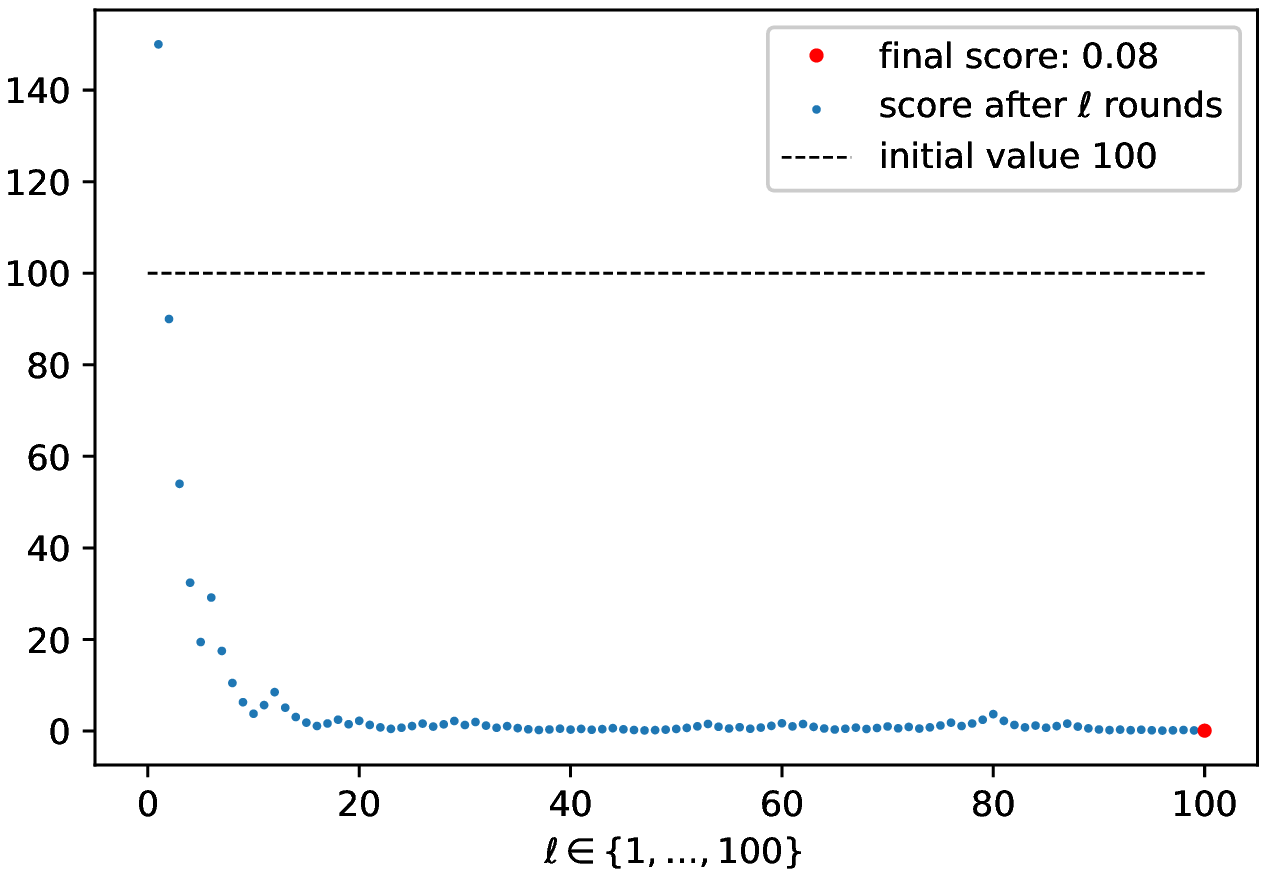}}\\
   \subfigure{\includegraphics[width = 0.45\textwidth]{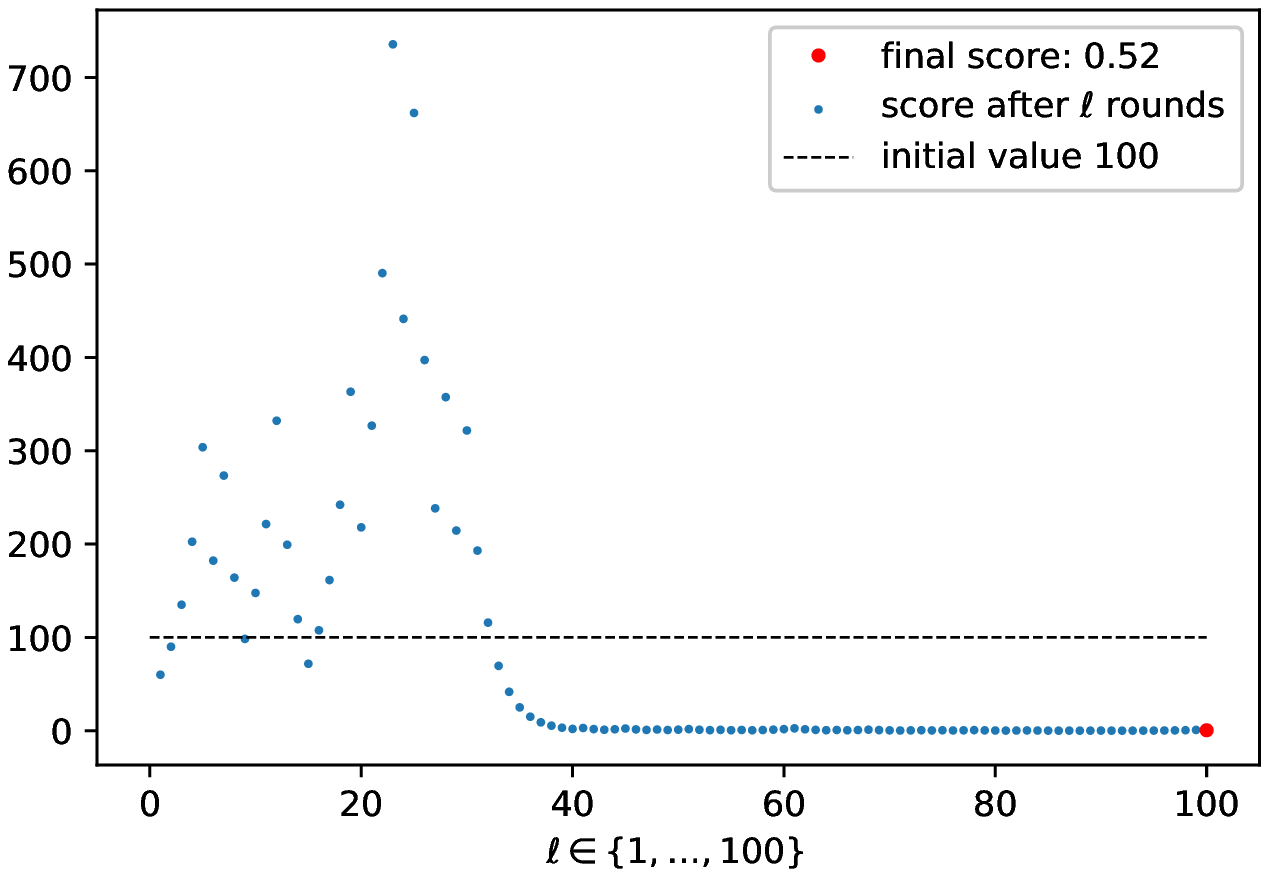}}\qquad
   \subfigure{\includegraphics[width = 0.45\textwidth]{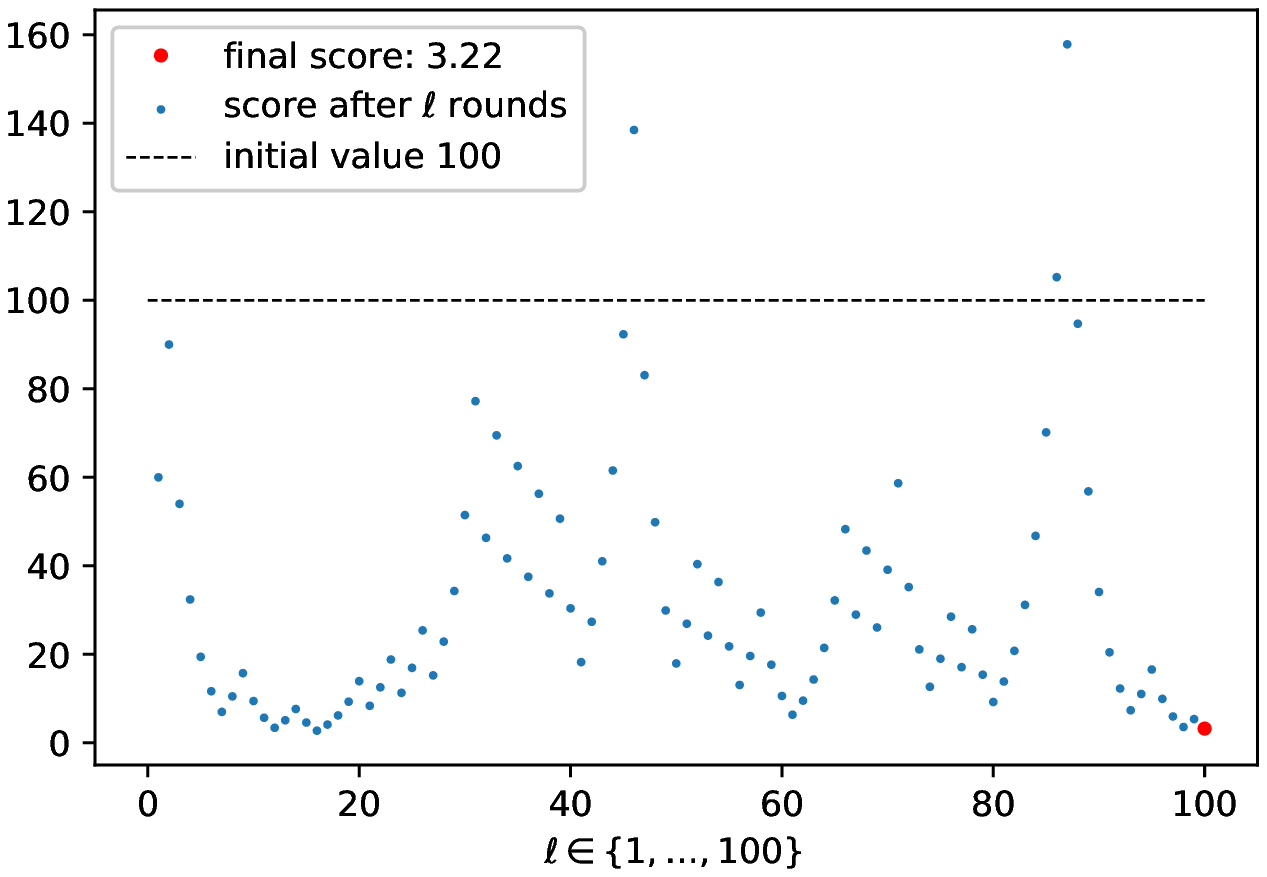}}\\
   \subfigure{\includegraphics[width = 0.45\textwidth]{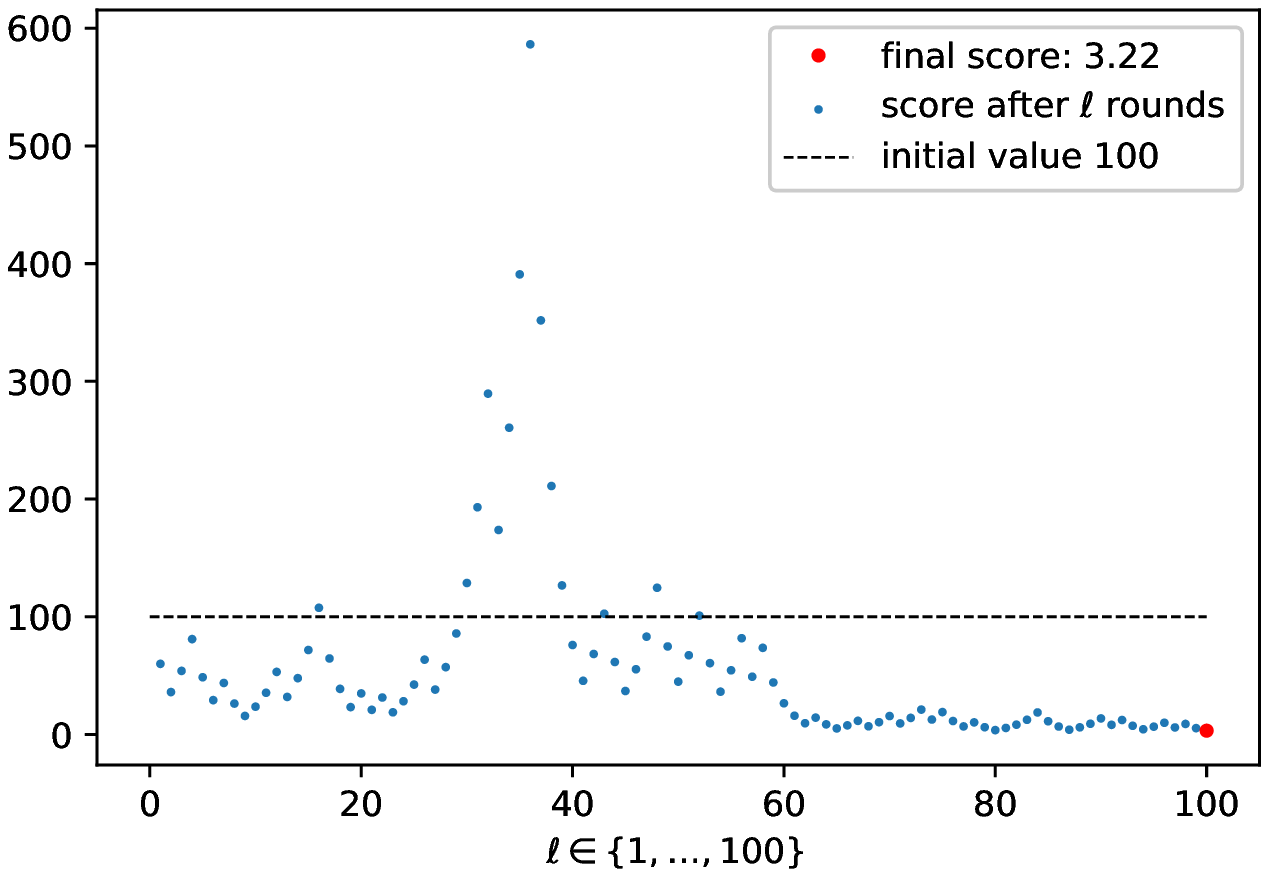}}\qquad
   \subfigure{\includegraphics[width = 0.45\textwidth]{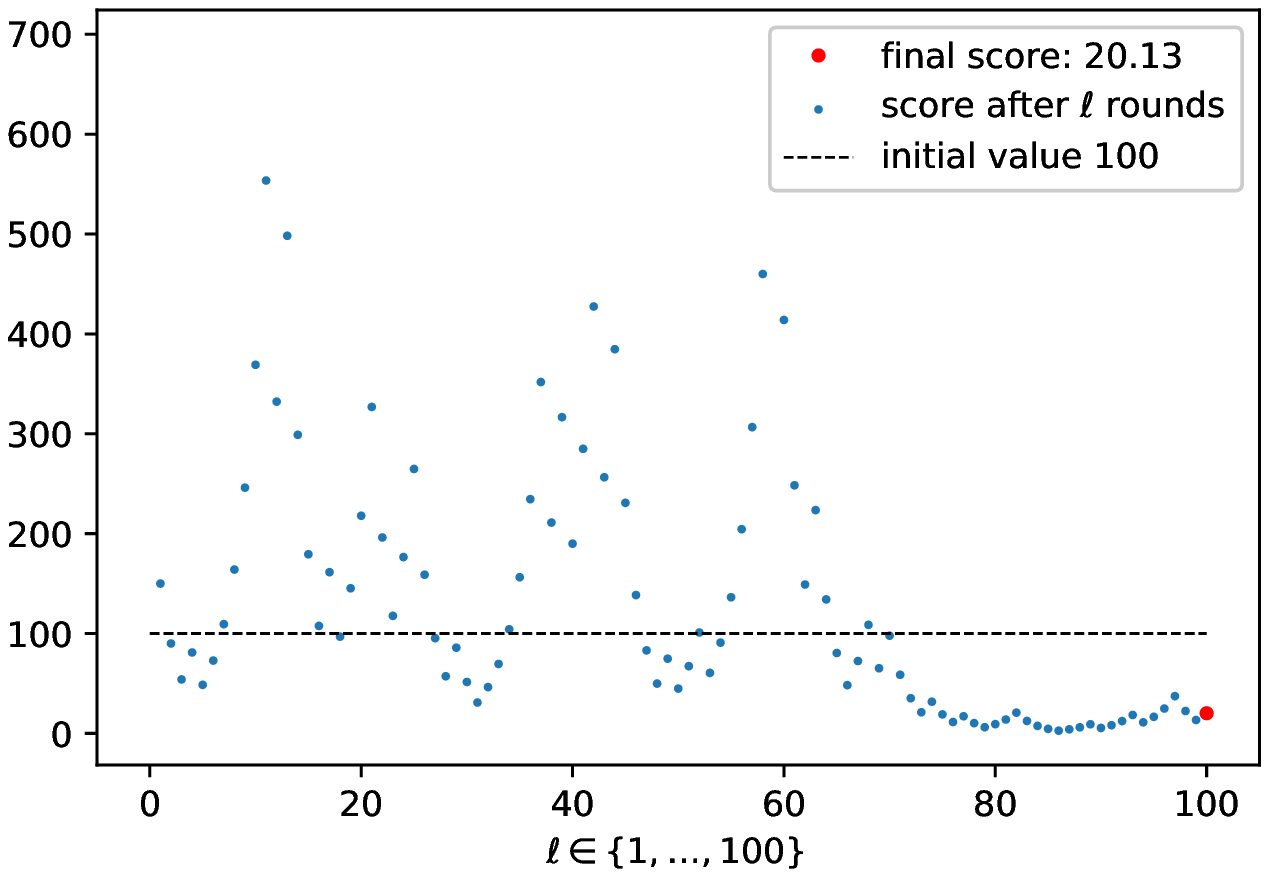}}\\
   \subfigure{\includegraphics[width = 0.45\textwidth]{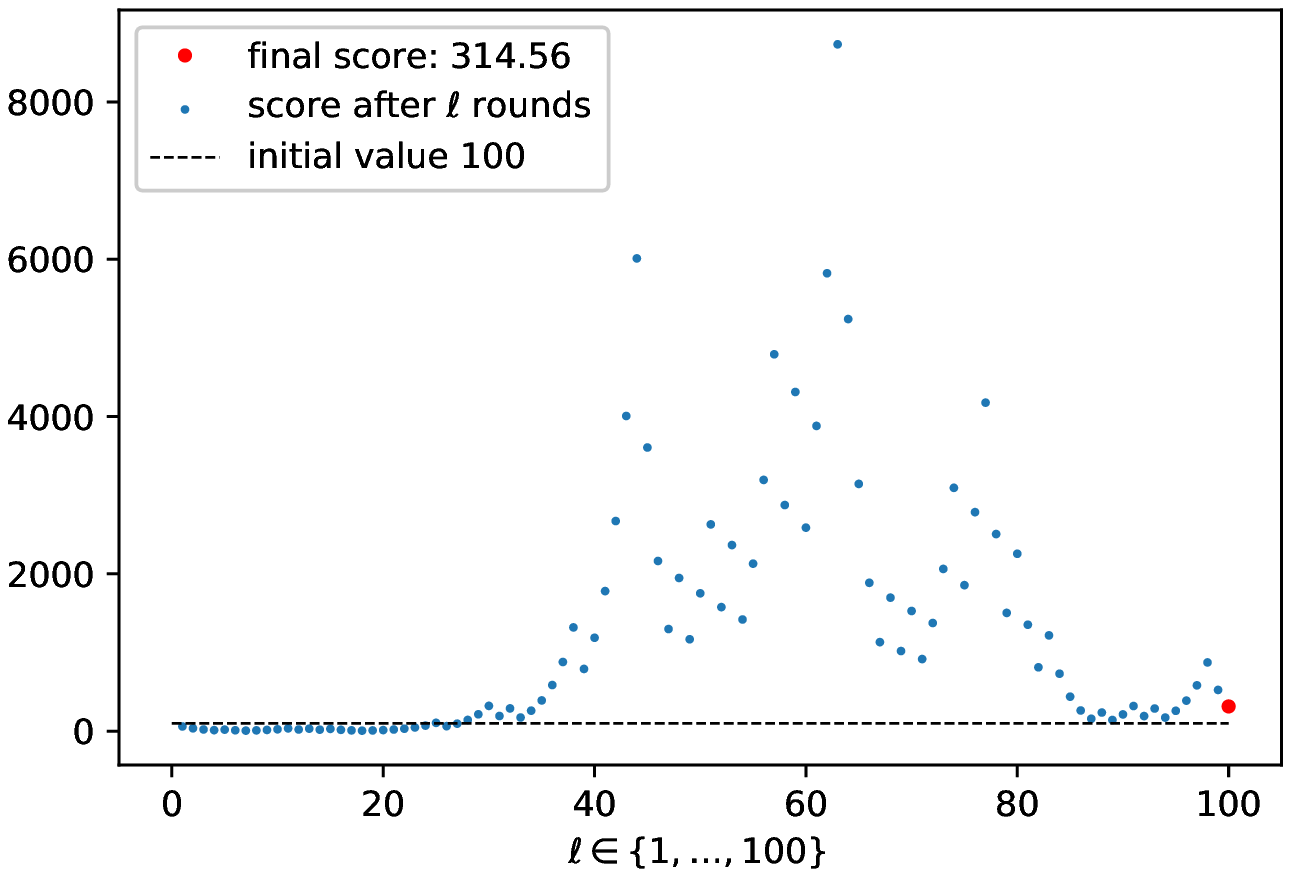}}\qquad
   \subfigure{\includegraphics[width = 0.45\textwidth]{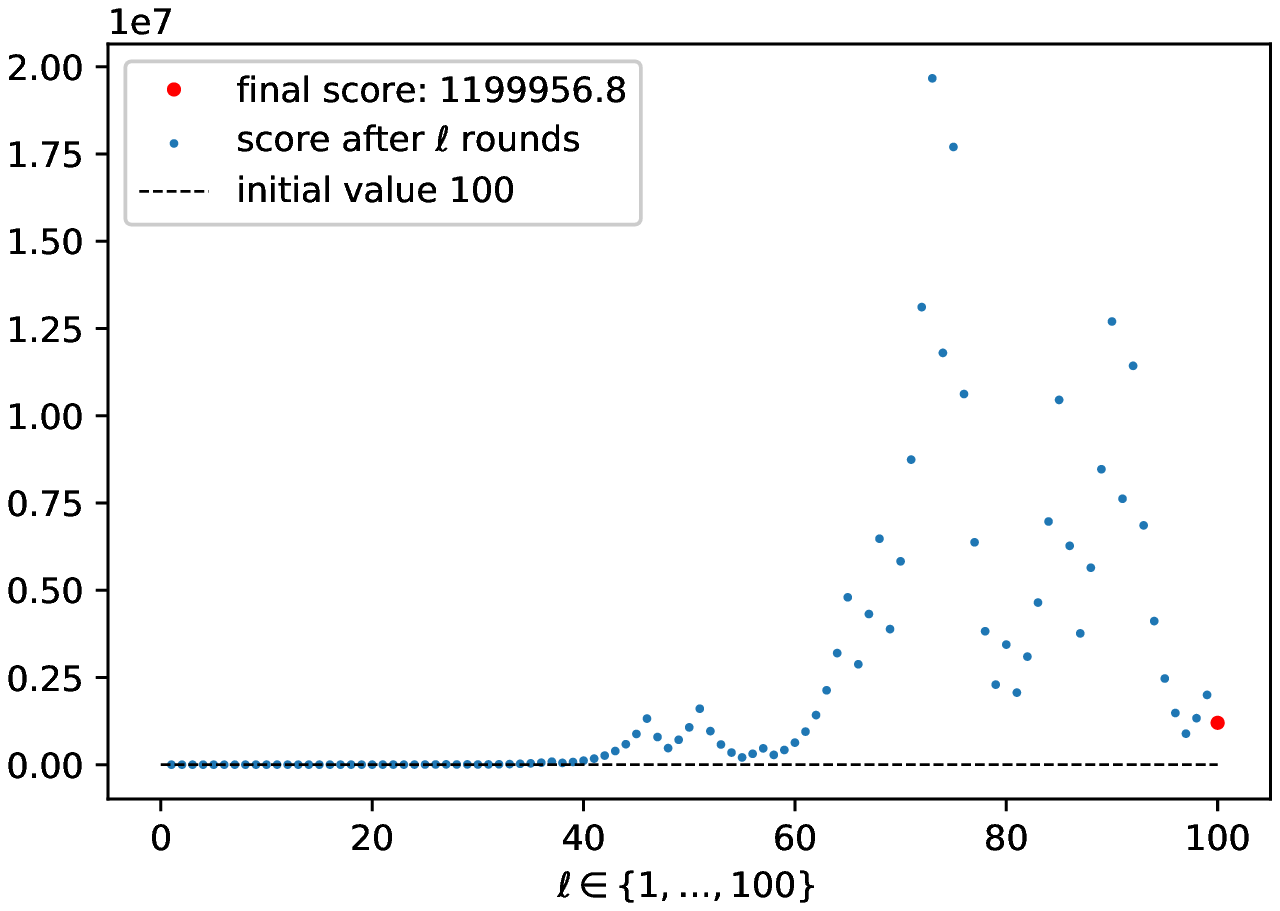}}
  \caption{Eight simulations of the score over time as a function of   successive rounds for $u=1.5$, $d=0.6$,  $p=0.5$, $a=100$ and $n=100$ (as in \cite{Els}).}
  \label{fig simulationen zeitlicher verlauf}
\end{figure}

\begin{figure}[!tbh]
	\centering
	\includegraphics[width = 0.9\textwidth]{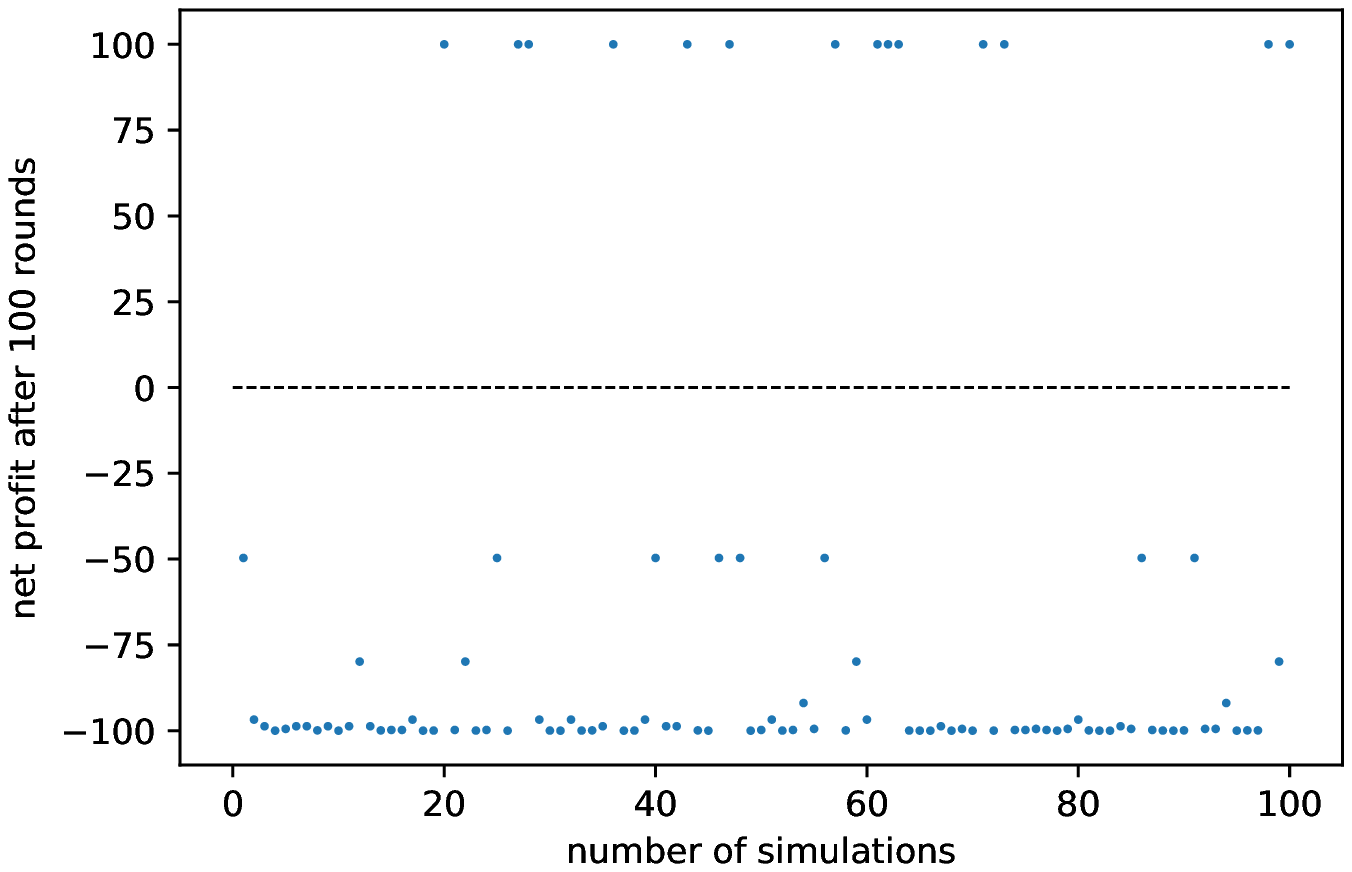}
	\caption{Net profit achieved in $100$ simulations of Elsberg's game \cite{Els} with $a=100$ and $n=100$.}
	\label{fig simulation win}
\end{figure}

\bigskip

\bigskip

\noindent
\textbf{Acknowledgement.}
The authors would like to thank Annette Hug for the idea for this project and numerous inspiring discussions. The authors are also grateful to Richard Gardner, Norbert Henze and Rolf Schneider for their friendly feedback and support.

\medskip

\textbf{Funding.}  Daniel Hug was supported by research grant HU 1874/5-1 (DFG).

\medskip

\textbf{Conflicts of interests.} The authors declare that they have no conflicts of interests and no conflicts of competing interests.

\clearpage

\end{document}